\documentclass[11pt]{article}
\usepackage[margin=1in]{geometry} 
\geometry{letterpaper}

\usepackage{amssymb,amsfonts,amsmath,bbm,mathrsfs,stmaryrd,mathtools,mathabx}
\usepackage{xcolor}
\usepackage{url}

\usepackage[shortlabels]{enumitem}
\usepackage{tensor}

\usepackage[colorlinks,
linkcolor=black!75!red,
citecolor=blue,
pdftitle={},
pdfproducer={pdfLaTeX},
pdfpagemode=None,
bookmarksopen=true,
bookmarksnumbered=true,
backref=page]{hyperref}

\usepackage{tikz}
\usetikzlibrary{arrows,calc,decorations.pathreplacing,decorations.markings,intersections,shapes.geometric,through,fit,shapes.symbols,positioning,decorations.pathmorphing}

\usepackage{braket}

\usepackage[amsmath,thmmarks,hyperref]{ntheorem}
\usepackage{cleveref}

\creflabelformat{enumi}{#2#1#3}

\crefname{section}{Section}{Sections}
\crefformat{section}{#2Section~#1#3} 
\Crefformat{section}{#2Section~#1#3} 

\crefname{subsection}{\S}{\S\S}
\AtBeginDocument{%
  \crefformat{subsection}{#2\S#1#3}%
  \Crefformat{subsection}{#2\S#1#3}%
}

\crefname{subsubsection}{\S}{\S\S}
\AtBeginDocument{%
  \crefformat{subsubsection}{#2\S#1#3}%
  \Crefformat{subsubsection}{#2\S#1#3}%
}

%

\theoremstyle{plain}

\newtheorem{lemma}{Lemma}[section]
\newtheorem{proposition}[lemma]{Proposition}
\newtheorem{corollary}[lemma]{Corollary}
\newtheorem{theorem}[lemma]{Theorem}


\theoremstyle{plain}
\theoremnumbering{Alph}
\newtheorem{theoremN}{Theorem}

\theoremstyle{plain}
\theorembodyfont{\upshape}
\theoremsymbol{\ensuremath{\blacklozenge}}

\newtheorem{definition}[lemma]{Definition}
\newtheorem{example}[lemma]{Example}

\newtheorem{remark}[lemma]{Remark}
\newtheorem{remarks}[lemma]{Remarks}

\newtheorem{conventions}[lemma]{Conventions}

\crefname{definition}{definition}{definitions}
\crefformat{definition}{#2definition~#1#3} 
\Crefformat{definition}{#2Definition~#1#3} 

\crefname{ex}{example}{examples}
\crefformat{example}{#2example~#1#3} 
\Crefformat{example}{#2Example~#1#3} 

\crefname{exs}{example}{examples}
\crefformat{examples}{#2example~#1#3} 
\Crefformat{examples}{#2Example~#1#3} 

\crefname{remark}{remark}{remarks}
\crefformat{remark}{#2remark~#1#3} 
\Crefformat{remark}{#2Remark~#1#3} 

\crefname{remarks}{remark}{remarks}
\crefformat{remarks}{#2remark~#1#3} 
\Crefformat{remarks}{#2Remark~#1#3} 

\crefname{convention}{convention}{conventions}
\crefformat{convention}{#2convention~#1#3} 
\Crefformat{convention}{#2Convention~#1#3} 

\crefname{conventions}{conventions}{conventions}
\crefformat{conventions}{#2conventions~#1#3} 
\Crefformat{conventions}{#2Conventions~#1#3} 

\crefname{notation}{notation}{notations}
\crefformat{notation}{#2notation~#1#3} 
\Crefformat{notation}{#2Notation~#1#3} 

\crefname{table}{table}{tables}
\crefformat{table}{#2table~#1#3} 
\Crefformat{table}{#2Table~#1#3}

\crefname{lemma}{lemma}{lemmas}
\crefformat{lemma}{#2lemma~#1#3} 
\Crefformat{lemma}{#2Lemma~#1#3} 

\crefname{proposition}{proposition}{propositions}
\crefformat{proposition}{#2proposition~#1#3} 
\Crefformat{proposition}{#2Proposition~#1#3} 

\crefname{corollary}{corollary}{corollaries}
\crefformat{corollary}{#2corollary~#1#3} 
\Crefformat{corollary}{#2Corollary~#1#3} 

\crefname{theorem}{theorem}{theorems}
\crefformat{theorem}{#2theorem~#1#3} 
\Crefformat{theorem}{#2Theorem~#1#3} 

\crefname{theoremN}{theorem}{theorems}
\crefformat{theoremN}{#2theorem~#1#3} 
\Crefformat{theoremN}{#2Theorem~#1#3} 

\crefname{enumi}{}{}
\crefformat{enumi}{#2#1#3}
\Crefformat{enumi}{#2#1#3}

\crefname{assumption}{assumption}{Assumptions}
\crefformat{assumption}{#2assumption~#1#3} 
\Crefformat{assumption}{#2Assumption~#1#3} 

\crefname{construction}{construction}{Constructions}
\crefformat{construction}{#2construction~#1#3} 
\Crefformat{construction}{#2Construction~#1#3} 

\crefname{equation}{}{}
\crefformat{equation}{(#2#1#3)} 
\Crefformat{equation}{(#2#1#3)}


\numberwithin{equation}{section}

\theoremstyle{nonumberplain}
\theoremsymbol{\ensuremath{\blacksquare}}

\newtheorem{proof}{Proof}
\newcommand\pf[1]{\newtheorem{#1}{Proof of \Cref{#1}}}

\newcommand\altpf[2]{\newtheorem{#1}{#2}}

\newcommand\bC{{\mathbb C}}

\newcommand\bG{{\mathbb G}}
\newcommand\bH{{\mathbb H}}

\newcommand\bK{{\mathbb K}}
\newcommand\bL{{\mathbb L}}

\newcommand\bR{{\mathbb R}}
\newcommand\bS{{\mathbb S}}
\newcommand\bT{{\mathbb T}}
\newcommand\bU{{\mathbb U}}

\newcommand\bZ{{\mathbb Z}}

\newcommand\cC{{\mathcal C}}

\newcommand\cF{{\mathcal F}}

\newcommand\cL{{\mathcal L}}

\newcommand\fG{{\mathfrak G}}

\newcommand\fM{{\mathfrak M}}

\newcommand\fn{{\mathfrak n}}

\DeclareMathOperator{\id}{id}

\DeclareMathOperator{\Ind}{\mathrm{Ind}}

\DeclareMathOperator{\spn}{\mathrm{span}}
\DeclareMathOperator{\Irr}{\mathrm{Irr}}



\newcommand{\cat}[1]{\textsc{#1}}

\newcommand\spr[1]{\cite[\href{https://stacks.math.columbia.edu/tag/#1}{Tag {#1}}]{stacks-project}}
\newcommand{\qedhere}{\mbox{}\hfill\ensuremath{\blacksquare}}


\newcommand{\xrightarrowdbl}[2][]{%
  \xrightarrow[#1]{#2}\mathrel{\mkern-14mu}\rightarrow
}



\title{Factorization and boundedness for representations of locally compact groups on topological vector spaces}
\author{Alexandru Chirvasitu}


\begin{document}

\date{}

\newcommand{\Addresses}{{
  \bigskip
  \footnotesize

  \textsc{Department of Mathematics, University at Buffalo}
  \par\nopagebreak
  \textsc{Buffalo, NY 14260-2900, USA}  
  \par\nopagebreak
  \textit{E-mail address}: \texttt{achirvas@buffalo.edu}


}}

\maketitle

\begin{abstract}
  We (a) prove that continuous morphisms from locally compact groups to locally exponential (possibly infinite-dimensional) Lie groups factor through Lie quotients, recovering a result of Shtern's on factoring norm-continuous representations on Banach spaces; (b) characterize the maximal almost-periodicity of the identity component $\mathbb{G}_0\le \mathbb{G}$ of a locally compact group in terms of sufficiently discriminating families of continuous functions on $\mathbb{G}$ valued in Hausdorff spaces generalizing an analogous result by Kadison-Singer; (c) apply that characterization to recover the von Neumann kernel of $\mathbb{G}_0$ as the joint kernel of all appropriately bounded and continuous $\mathbb{G}$-representations on topological vector spaces extending Kallman's parallel statement for unitary representations, and (d) provide large classes of complete locally convex topological vector spaces (e.g. arbitrary products of Fr\'echet spaces) with the property that compact-group representations thereon whose vectors all have finite-dimensional orbits decompose as finite sums of isotypic components. This last result specializes to one of Hofmann-Morris on representations on products of real lines. 
\end{abstract}

\noindent {\em Key words: Lie group; locally compact group; representation; separately continuous; jointly continuous; topological vector space; Banach space; locally convex; absolutely convex; bounded set; barreled; bornological; normable; Banach space; maximally almost periodic; uniform space; completion; weakly complete; bornology}

\vspace{.5cm}

\noindent{MSC 2020: 22D05; 22D12; 46A03; 46A04; 22E66; 54E15; 54D35; 46A08; 46G10; 46A16}


\section*{Introduction}

The several themes the paper touches on all ultimately emerge from and point back to the study of norm-continuous (or uniformly continuous) representations of locally compact groups. The precise setting differs among sources: some \cite{zbMATH03072119,klm_unif,sing_unif} focus on unitary representations while others \cite{gurar_unif,lv_unif,zbMATH05628052} discuss representations on Banach spaces. Similarly, the framework is occasionally limited to particular classes of groups: Lie (finite dimensional), or connected, or both. The common pattern is the constrained nature and structure of such representations.

As a case in point, consider the fact \cite[Corollary 2]{zbMATH05628052} that a norm-continuous representation of a compact group $\bG$ on a Banach space $E$ decomposes as a sum of finitely many {\it isotypic components} \cite[Definition 4.21]{hm4}:
\begin{equation*}
  E=\bigoplus_{i=1}^n E_{\alpha_i},\quad \alpha_i\in\Irr(\bG):=\text{classes of irreducible $\bG$-representations},
\end{equation*}
where $E_{\alpha}$, $\alpha\in\Irr(\bG)$ is the set of vectors generating finite-dimensional subrepresentations that are direct sums of copies of $\alpha$. The unitary case also follows from \cite[main result stated on p.257]{klm_unif}, while \cite[Theorem 2.10]{chirvasitu2023quantum} contains a number of other characterizations (again in the Banach setup). In \cite{zbMATH05628052}, the key result delivering this finiteness is the factorization \cite[Theorem 1]{zbMATH05628052} of every norm-continuous representation, of any locally compact group whatsoever, through a Lie-group quotient. \Cref{th:lcg2lie} below records the observation that this goes through rather generally:

\begin{theoremN}
  A continuous morphism $\bG\to \bU$ from a locally compact group to a topological group factors through a Lie quotient provided all compact subgroups of $\bU$ are finite-dimensional Lie.

  In particular, this applies to {\it locally exponential} Lie groups $\bU$.  \qedhere
\end{theoremN}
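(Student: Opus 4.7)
The plan is to produce a closed normal subgroup $N \triangleleft \bG$ contained in $\ker\phi$ for which $\bG/N$ is a finite-dimensional Lie group; this is what it means for $\phi$ to factor through a Lie quotient. First I would invoke Gleason--Yamabe structure theory: if $\bG$ is almost connected, it comes equipped with a downward-filtered family $\cN$ of compact normal subgroups $N \triangleleft \bG$ with each $\bG/N$ a finite-dimensional Lie group and $\bigcap_{N \in \cN} N = \{e\}$; in the general case, van Dantzig first supplies an open almost connected subgroup $M \le \bG$, and the same theorem applies to $M$ in place of $\bG$. Each $\phi(N)$ is then a compact subgroup of $\bU$, hence by hypothesis a finite-dimensional Lie group.

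Next, continuity of $\phi$ together with Hausdorffness of $\bU$ forces $\bigcap_{N \in \cN}\phi(N)=\{e\}$: any $u \ne e$ in $\bU$ can be separated from $e$ by an open neighborhood $V$, and by Yamabe some $N \in \cN$ lies in $\phi^{-1}(V)$, so $u \notin \phi(N)$. Since $\phi(N_1 \cap N_2) \subseteq \phi(N_1)\cap\phi(N_2)$, the family $\{\phi(N)\}$ is itself downward filtered. I would then run a two-stage monotone-stabilization argument on the net of compact f.d.\ Lie groups $\{\phi(N)\}$. Stage one: the integer $\dim\phi(N)$ is non-increasing along $\cN$, so eventually constant at some $d \ge 0$; past that point the identity components $\phi(N)_0$ form a nested family of compact connected Lie groups all of dimension $d$, hence all coincide with a fixed $C^0$, and $C^0 \subseteq \bigcap_N \phi(N) = \{e\}$ forces $d=0$. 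Stage two: $\phi(N)$ is eventually finite, and the same argument applied to cardinalities $|\phi(N)|$ instead of dimensions produces some $N^* \in \cN$ with $\phi(N^*)=\{e\}$.

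At this point $N^* \subseteq \ker\phi$ and $M/N^*$ is Lie, so $\phi|_M$ factors through the Lie group $M/N^*$. To promote this to a factorization of all of $\phi$ through a Lie quotient of $\bG$, I would arrange $N^*$ to be normal in $\bG$ rather than only in $M$; once this is done, $M/N^*$ is an open Lie subgroup of $\bG/N^*$, and a topological group with an open Lie subgroup is itself Lie. I expect this last normality upgrade to be the most delicate step: the straightforward form of Gleason--Yamabe only offers subgroups normal in the open subgroup $M$, so one must either pass to the $\bG$-normal core of $N^*$ while verifying that the quotient remains Lie, or extract the Yamabe filter from inside $\bG_0$ already consisting of $\bG$-characteristic (hence $\bG$-normal) subgroups. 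The stated ``in particular'' clause is routine: a compact subgroup of a locally exponential Lie group sits inside a local exponential chart and is thereby itself a finite-dimensional Lie group.
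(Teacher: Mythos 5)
Your main construction is sound and genuinely different from the paper's: instead of a single compact normal $\bK\trianglelefteq \bH$ with $\bH/\bK$ Lie (the paper then simply notes that $\varphi$ factors through the extension of $\bH/\bK$ by the compact, hence Lie, group $\varphi(\bK)$, and quotes Iwasawa's theorem \cite[Theorem 7]{iw} that an extension of a Lie group by a Lie group is Lie), you use the full Gleason--Yamabe filter $\cN$ and a dimension-then-cardinality stabilization to force some $N^*\in\cN$ entirely inside $\ker\phi$. That stabilization argument, together with the compactness argument giving $\bigcap_{N}\phi(N)=\{e\}$, is correct; its cost is that it needs the ``arbitrarily small'' form of Yamabe's theorem, whereas the paper's Iwasawa route never has to make the image of the compact subgroup trivial.

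The genuine gap is the step you yourself flag and leave open, and both patches you suggest are the wrong ones. Passing to the $\bG$-normal core of $N^*$ goes in the wrong direction: the core is contained in $\ker\phi$, but $\bG/\mathrm{core}$ has no reason to be Lie (the core can be trivial while $\bG$ is not Lie, e.g.\ a profinite-by-discrete group with a coordinate shift), and there is in general no Yamabe filter of $\bG$-characteristic subgroups inside $\bG_0$ (think of totally disconnected $\bG$, where $\bG_0$ is trivial). The correct move --- and it is easy, not delicate --- is the one the paper performs as its opening reduction: you do not need $N^*$ itself normal in $\bG$. Let $L\trianglelefteq\bG$ be the smallest closed normal subgroup containing $N^*$; since $\ker\phi$ is closed, normal and contains $N^*$, we get $L\subseteq\ker\phi$. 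The image of $M$ in $\bG/L$ is an open subgroup and is a quotient of the Lie group $M/N^*$ by a closed normal subgroup, hence finite-dimensional Lie, and a topological group with an open Lie subgroup is Lie; so $\bG/L$ is the desired Lie quotient through which $\phi$ factors. Separately, your justification of the ``in particular'' clause is incorrect as stated: a compact subgroup of a locally exponential Lie group is almost never contained in an exponential chart (already a circle subgroup is not), so there is no routine chart argument; the clause rests on the nontrivial theorem that locally compact subgroups of locally exponential Lie groups are finite-dimensional Lie \cite[Theorem IV.3.16]{neeb-lc}, which is what the paper cites.
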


The phrase {\it Lie group} is construed as in \cite[Definition II.1.1]{neeb-lc}: topological groups locally modeled on (possibly infinite-dimensional) {\it locally convex} \cite[\S 18.1]{k_tvs-1} topological vector spaces. Recall \cite[Definition IV.1.1]{neeb-lc} that the {\it locally exponential} Lie groups are those locally diffeomorphic to their Lie algebra via an exponential function, and that they do indeed \cite[Theorem IV.3.16]{neeb-lc} satisfy the compact-subgroup. \cite[Theorem 1]{zbMATH05628052} then follows by further specializing to $\bU:=GL(E)$ for a Banach space $E$. 

In the course of investigating norm-continuous unitary representations of a locally compact {\it connected} group $\bG$, \cite[Lemmas 1 and 2]{klm_unif} (in a slight rephrasing) note that the joint kernel of those representations is the smallest closed normal subgroup $\bH\trianglelefteq \bG$ for which
\begin{equation}\label{eq:rdl}
  \bG/\bH\cong \bR^d\times \bL
  ,\quad
  d\in \bZ_{\ge 0}
  ,\quad
  \bL\text{ compact connected}.
\end{equation}
In other words, $\bH$ is precisely the {\it von Neumann kernel} \cite[\S 0]{zbMATH03680105} of $\bG$: the intersection of the kernels of finite-dimensional unitary $\bG$-representations. Those results in turn rely heavily on the characterization \cite[Lemma 1 and Corollary 4]{zbMATH03072119} of groups of the form \Cref{eq:rdl} in terms of certain families of conjugacy-invariant functions thereon. This too generalizes (\Cref{th:upperhemi}) so as to allow extending the discussion to representations on Banach spaces and beyond:

\begin{theoremN}
  The identity component $\bG_0\le \bG$ of a locally compact group is of the form \Cref{eq:rdl} if and only if there is a family of continuous functions $\bG\xrightarrow{\varphi}X_{\varphi}$ into Hausdorff topological spaces such that
  \begin{itemize}
  \item for every $1\ne g\in \bG$ there is some $\varphi$ with $\varphi(g)\ne \varphi(1)$;
  \item and for every $\varphi$ the set-valued map
    \begin{equation*}
      \widetilde{\varphi}:=
      \bG\ni g
      \xmapsto{\quad}
      \varphi\left(\text{$\bG_0$-conjugacy class of $g$}\right)
      \in 2^{X_{\varphi}}
    \end{equation*}
    is {\it upper hemicontinuous} \cite[Definition 17.2]{zbMATH05265624} at $1\in \bG$:
    \begin{equation*}
      \forall\text{ neighborhood }U\ni \varphi(1)
      ,\quad
      \exists\text{ neighborhood }V\ni 1\in \bG
      \text{ with }
      g\in V\xRightarrow{\quad}\widetilde{\varphi}(g)\subseteq U. 
    \end{equation*}
    \qedhere
  \end{itemize}
\end{theoremN}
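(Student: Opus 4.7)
The plan is to reduce to the classical Kadison-Singer characterization \cite[Lemma 1 and Corollary 4]{zbMATH03072119}: a connected locally compact group $\bG_0$ has the form \Cref{eq:rdl} exactly when it admits a family of continuous, $\bG_0$-conjugation-invariant, complex-valued functions separating every non-identity element from $1$. The upper-hemicontinuity hypothesis in the present statement is engineered precisely to manufacture such conjugation-invariant data from merely well-behaved maps defined on all of $\bG$.

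For the $(\Leftarrow)$ direction, given the family $\{\varphi:\bG\to X_\varphi\}$, for each $\varphi$ and each continuous real-valued $\alpha:X_\varphi\to\bR$ with $\alpha(\varphi(1))=0$ I would form
\begin{equation*}
\psi_{\varphi,\alpha}(g):=\sup_{h\in\bG_0}|\alpha(\varphi(hgh^{-1}))|,\qquad g\in\bG_0.
\end{equation*}
A change of variable in the supremum shows $\psi_{\varphi,\alpha}$ is $\bG_0$-conjugation-invariant, and the upper-hemicontinuity hypothesis directly yields continuity at $1$: choosing $U\ni\varphi(1)$ so that $|\alpha|<\epsilon$ on $U$ and $V\ni 1$ in $\bG$ so that $\varphi(\bG_0 V\bG_0^{-1})\subseteq U$, one gets $\psi_{\varphi,\alpha}(V\cap\bG_0)\subseteq[0,\epsilon)$. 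When $\varphi(g)\ne\varphi(1)$, a continuous $\alpha$ separating these two points in $X_\varphi$ produces $\psi_{\varphi,\alpha}(g)>0$. Feeding the resulting family into Kadison-Singer on $\bG_0$ then yields $\bG_0\cong\bR^d\times\bL$.

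For the $(\Rightarrow)$ direction, assume $\bG_0\cong\bR^d\times\bL$ and split into regimes. For $g\notin\bG_0$, the quotient $\bG\to\bG/\bG_0$ into the Hausdorff coset space distinguishes $g$ from $1$, and because $\bG_0$ is normal it acts trivially by conjugation on $\bG/\bG_0$, so $\widetilde{\varphi}$ is single-valued and upper hemicontinuity at $1$ is vacuous. For $g\in\bG_0\setminus\{1\}$, I would begin with a conjugation-invariant continuous $\psi:\bG_0\to\bC$ provided by classical Kadison-Singer with $\psi(g)\ne\psi(1)$ and extend it to a continuous $\tilde\psi:\bG\to\bC$, exploiting that $[\bG_0,\bG_0]\subseteq\{0\}\times\bL$ is compact so that $\bG_0$-conjugation orbits of points near $1$ remain precompact in cosets $g\bG_0$, allowing a controlled extension to preserve the upper hemicontinuity requirement.

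The principal technical obstacle is this final step of $(\Rightarrow)$: a naive Tietze extension of $\psi$ from the closed subgroup $\bG_0$ to $\bG$ carries no built-in equivariance under the $\bG$-action on $\bG_0$ and can fail upper hemicontinuity when $\bG_0$ is non-open in $\bG$ (so that elements of $\bG\setminus\bG_0$ cluster arbitrarily close to $1$). Addressing this likely demands either an averaging argument over the compact commutator subgroup $[\bG_0,\bG_0]\subseteq\bL$ to restore $\bG_0$-invariance up to controlled error, or a direct construction of $\tilde\psi$ as a matrix coefficient of an induced representation $\Ind_{\bG_0}^\bG\pi$ whose built-in equivariance makes upper hemicontinuity automatic. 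A milder parallel issue in $(\Leftarrow)$ is that $\psi_{\varphi,\alpha}$ is only continuous \emph{at} $1$ rather than globally and may require smoothing (for instance, replacing the supremum by an integral against Haar measure on a maximal compact subgroup) before feeding into the classical statement.
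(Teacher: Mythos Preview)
Your $(\Leftarrow)$ strategy has a genuine gap: you pass from $X_\varphi$-valued data to real-valued data via a continuous $\alpha:X_\varphi\to\bR$ separating $\varphi(g)$ from $\varphi(1)$, but the hypothesis only gives Hausdorff $X_\varphi$, and Hausdorff spaces need not be functionally Hausdorff. No such $\alpha$ need exist. The paper's argument avoids this entirely by working directly in $X_\varphi$: for each $p\in\partial K$ (with $K$ a fixed compact neighborhood of $1$) it picks disjoint \emph{open sets} $V_{p,1}\ni\varphi_p(1)$ and $V_{p,p}\ni\varphi_p(p)$, covers $\partial K$ by finitely many $\varphi_{p_i}^{-1}(V_{p_i,p_i})$, and then uses upper hemicontinuity at $1$ plus connectedness of $\bG_0$-conjugacy classes to trap those classes inside $K$. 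This yields condition (a) of the paper's \Cref{th:upperhemi} (a basis of $\bG_0$-invariant compact neighborhoods of $1$), whence $\bG_0\cong\bR^d\times\bL$ by Iwasawa. Your lower-semicontinuous $\psi_{\varphi,\alpha}$ would feed into the same boundary argument once $\alpha$ exists, so the ``continuity only at $1$'' issue you flag is indeed mild; the real obstruction is producing $\alpha$ at all.

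For $(\Rightarrow)$ your extension-from-$\bG_0$ plan is where the proposal is weakest, and the paper takes a quite different route that sidesteps it. Rather than extend functions off $\bG_0$, the paper passes to an open almost-connected $\bH\le\bG$, writes $\bH\cong\varprojlim_i\bH_i$ as a pro-Lie limit, and observes that on each Lie quotient $\bH_i$ the $\bG_0$-conjugation action factors through the \emph{compact} group $\bL_{i,0}$ (since $\bR^d$ is central in $\bH_{i,0}\cong\bR^d\times\bL_{i,0}$). The orbit space $\bH_i/\bL_{i,0}$ is then locally compact Hausdorff, hence Tychonoff, so carries enough continuous real-valued functions; pulling these back to $\bH$ and declaring everything constant off $\bH$ gives functions on $\bG$ that are genuinely constant on $\bG_0$-conjugacy classes (so upper hemicontinuity is automatic). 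This buys you the stronger conclusion (d) of \Cref{th:upperhemi} directly, with no Tietze extension, no averaging, and no induced-representation machinery.
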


There are thus (\Cref{th:enoughreps} and \Cref{cor:vnker-tvs}) representation-theoretic characterizations of locally compact groups with identity components of the form \Cref{eq:rdl} via broader classes of topological vector spaces (than Banach spaces, say)

\begin{theoremN}\label{thn:vnk}
  Let $\bG$ be a locally compact group. 
  
  \begin{enumerate}[(1)]
  \item\label{item:thn:vnk:g0} The identity component of $\bG$ is of the form \Cref{eq:rdl} if and only if $\bG$ has a separating family of representations $\bG\xrightarrow{\varphi}GL(E_{\varphi})$ on topological vector spaces such that
    \begin{itemize}
    \item $\varphi$ is continuous for the topology of uniform convergence on members of a {\it covering bornology} \cite[\S III.1.1, Definition 1]{bourb_tvs} $\fM\subseteq 2^{E_{\varphi}}$;

    \item the family $\varphi(\bG_0)\subset GL(E_{\varphi})$ is equicontinuous;

    \item and $\varphi(\bG_0)M\in \fM$ for all $M\in \fM$. 
    \end{itemize}

  \item\label{item:thn:vnk:vnk} Consequently, the von Neumann kernel of $\bG_0\le \bG$ is precisely the common kernel of all representations $\varphi$ as in \Cref{item:thn:vnk:g0}.  \qedhere
  \end{enumerate}   
\end{theoremN}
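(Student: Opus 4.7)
The plan is to tie both parts of the theorem back to the upper hemicontinuity theorem stated just above, using orbit maps in one direction and a construction of representations in the other.

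For (1)~$\Leftarrow$, given a separating family $\varphi: \bG \to GL(E_\varphi)$ satisfying the three bullet points, I would pass to the orbit maps $\psi_{\varphi,v}: g \mapsto \varphi(g)v$ for $v \in E_\varphi$. These are continuous because $\varphi$ is continuous in the topology of uniform convergence on the covering bornology $\fM$ (which contains all singletons), and the resulting family still separates points of $\bG$. The main verification is upper hemicontinuity of $\widetilde{\psi_{\varphi,v}}$ at $1$: given a neighborhood $U \ni v$, equicontinuity of $\varphi(\bG_0)$ produces a $0$-neighborhood $U_0$ with $\varphi(\bG_0) U_0 \subseteq U - v$; the hypothesis $\varphi(\bG_0)\{v\} \in \fM$ together with continuity of $\varphi$ at $1$ yields a neighborhood $V \ni 1$ with $\varphi(g) w - w \in U_0$ for all $g \in V$ and $w \in \varphi(\bG_0)\{v\}$; and the identity $\varphi(h g h^{-1}) v - v = \varphi(h)\bigl(\varphi(g) \varphi(h^{-1}) v - \varphi(h^{-1}) v\bigr)$ places the difference into $\varphi(h) U_0 \subseteq U - v$ uniformly in $h \in \bG_0$. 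Invoking the upper-hemicontinuity theorem then produces $\bG_0 \cong \bR^d \times \bL$.

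For (1)~$\Rightarrow$, which I expect to be the main obstacle, the starting point is that $\bG_0 \cong \bR^d \times \bL$ is maximally almost periodic, so a separating family of finite-dimensional unitary representations $\rho: \bG_0 \to U(n)$ is at hand by classical theory. Each $\rho$ should be extended to a representation of $\bG$ by a suitable induction --- for instance, on the space $E_\varphi$ of continuous $\rho$-equivariant sections $f: \bG \to \bC^n$, with $\bG$ acting by left translation --- with $E_\varphi$ equipped with the covering bornology generated by $\varphi(\bG_0)$-orbits of finite subsets. Bornology preservation then holds by construction, equicontinuity of $\varphi(\bG_0)$ follows from unitarity of $\rho$, and continuity in the topology of uniform convergence on that bornology will follow from the small-invariant-neighborhood property of $\bR^d \times \bL$ (a consequence of its $\bR^d$-factor being central and its $\bL$-factor compact). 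The delicate point is arranging all three conditions to hold simultaneously, which is where the bulk of the work lies.

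Part~(2) then follows from~(1) by two inclusions. For $\bN_{\mathrm{vN}}(\bG_0) \subseteq \bigcap_\varphi \ker\varphi$: given any $\varphi$ from a family as in~(1), the orbit maps of $\varphi|_{\bG_0}$ descend to a separating family of continuous functions on the connected locally compact group $\bG_0/(\bG_0 \cap \ker\varphi)$ that are upper hemicontinuous at $1$ by the same computation as above, so the upper-hemicontinuity theorem makes this quotient of the form $\bR^d \times \bL$ and hence maximally almost periodic, forcing $\bN_{\mathrm{vN}}(\bG_0) \subseteq \bG_0 \cap \ker\varphi$. For the reverse inclusion $\bigcap_\varphi \ker\varphi \subseteq \bN_{\mathrm{vN}}(\bG_0)$: $\bN_{\mathrm{vN}}(\bG_0)$ is characteristic in $\bG_0$ and hence normal in $\bG$, and $\bG_0/\bN_{\mathrm{vN}}(\bG_0)$ is a connected maximally almost periodic locally compact group, so of the form $\bR^d \times \bL$ by the classical structure theory; applying~(1)~$\Rightarrow$ to $\bG/\bN_{\mathrm{vN}}(\bG_0)$ and lifting the resulting separating family back to $\bG$ then yields representations satisfying the three conditions whose joint kernel is exactly $\bN_{\mathrm{vN}}(\bG_0)$, bounding the joint kernel of~(1) from above.
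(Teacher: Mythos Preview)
Your backward direction (1)$\Leftarrow$ and part~(2) are correct and match the paper's approach. The paper also verifies the upper-hemicontinuity condition of \Cref{th:upperhemi}; the only cosmetic difference is that it uses the representations themselves as maps $\bG\to\cL_{\fM}(E_\varphi)$ rather than the individual orbit maps $g\mapsto\varphi(g)v$, but the underlying computation with $\varphi(hgh^{-1})-1=\varphi(h)\bigl(\varphi(g)-1\bigr)\varphi(h^{-1})$ is identical. Your argument for~(2) in fact spells out what the paper leaves as an ``immediate consequence'' in \Cref{cor:vnker-tvs}.

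The forward direction (1)$\Rightarrow$ is where you diverge, and where your sketch is thinnest. The paper does \emph{not} induce from $\bG_0$. Instead it proves the formally stronger statement that $\bG$ has a separating family of \emph{norm-continuous unitary} representations (condition~(b) of \Cref{th:enoughreps}), from which the three bullet points follow automatically. The construction uses condition~(c) of \Cref{th:upperhemi}: an \emph{open} subgroup $\bH\le\bG$ realized as a cofiltered limit of Lie quotients $\bH_i$, each containing an open $\bR^d\times\bL_i$. One then manufactures the obvious finite-dimensional unitary representations of $\bR^d\times\bL_i$, pulls them back to $\bH$, and inducts to $\bG$ via the elementary $\ell^2$-sum over the discrete coset space $\bG/\bH$. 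Openness of $\bH$ is precisely what makes this induction preserve norm-continuity.

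Your proposed induction from the merely closed $\bG_0$ can be made to work, but two points need tightening. First, you never specify the topology on $E_\varphi$; taking the pointwise (product-over-cosets) topology makes the fibrewise unitarity argument for equicontinuity go through, but this should be said. Second, the $\fM$-continuity step requires that for $g$ near $1$ in $\bG$ the $\bG_0$-conjugates $h^{-1}gh$ stay uniformly near $1$; this is condition~(a) of \Cref{th:upperhemi} (invariant neighborhoods in $\bG$), not merely the SIN property of $\bG_0\cong\bR^d\times\bL$ itself, so you are implicitly invoking the nontrivial implication (b)$\Rightarrow$(a) there. The paper's detour through the open $\bH$ and Hilbert-space induction buys you all three bullet points for free and yields the sharper intermediate result.
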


The conditions of \Cref{thn:vnk}\Cref{item:thn:vnk:g0} specialize precisely to norm-continuity and norm-boundedness for representations on Banach spaces or, more generally, on {\it quasi-Banach} spaces \cite[\S A.1]{pvl_disc}. 

\Cref{se:hm4-e48} varies the theme of isotypic-component finiteness, taking \cite[Exercise E4.8]{hm4} as a starting point. That result says that compact group representations on {\it weakly complete} topological vector spaces $E$ (i.e. \cite[Definition A7.8]{hm4} products of real lines equipped with their product topology) decompose as finite sums of isotypic components provided every orbit $\bG v$, $v\in E$ has finite-dimensional span. In the language of \Cref{def:gfin}, a {\it locally $\bG$-finite} representation of a compact group $\bG$ on a weakly complete TVS is {\it globally $\bG$-finite}. 

It is, on the one hand, easy to recover the analogous result (\Cref{pr:hme48-frechet}) for {\it Fr\'echet spaces} $E$ instead (\cite[\S 10]{trev_tvs}: locally convex complete metrizable TVSs). On the other hand, weakly complete spaces will be Fr\'echet only when the family of real-line factors is at most countable. It is natural, then, to try to find a common generalization of the two results: a class of topological vector spaces that naturally houses both weakly complete and Fr\'echet spaces, for which local $\bG$-finiteness entails its global counterpart. \Cref{th:lim-stab-glob} does this in terms of realizing a TVS as a cofiltered limit of well-behaved quotients; we record herer the more straightforwardly-stated consequence (\Cref{cor:lim-stab-glob}):

\begin{theoremN}
  A locally $\bG$-finite representation of a compact group $\bG$ on a product of Fr\'echet spaces is a finite direct sum of isotypic components.  \qedhere
\end{theoremN}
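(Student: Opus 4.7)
The plan is to realize a product of Fréchet spaces as a cofiltered limit of Fréchet spaces, thereby reducing the statement to the Fréchet case (\Cref{pr:hme48-frechet}) via the general limit-preservation theorem \Cref{th:lim-stab-glob}.

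Concretely, write $V := \prod_{i \in I} F_i$ for the ambient TVS with its $\bG$-action, and for each finite subset $S \subseteq I$ set $V_S := \prod_{i \in S} F_i$. Each $V_S$ is a finite product of Fréchet spaces, hence itself Fréchet. The natural projections $p_S : V \twoheadrightarrow V_S$ are continuous, linear, and surjective, and they are $\bG$-equivariant since the action is coordinatewise. Ordered by reverse inclusion of the indexing $S$, the system $\{V_S\}$ is cofiltered, and the canonical map $V \to \varprojlim_S V_S$ is an isomorphism of topological vector spaces when the right-hand side is given the limit topology inherited from the products.

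Next I would check that local $\bG$-finiteness descends to each quotient $V_S$: given $v \in V_S$, lift to any $\tilde v \in V$; the finite-dimensional span of $\bG \tilde v$ is mapped by $p_S$ onto a finite-dimensional subspace containing $\bG v$. Hence \Cref{pr:hme48-frechet} applies to each $V_S$, producing a finite isotypic decomposition thereof. With these decompositions in hand, the hypotheses of \Cref{th:lim-stab-glob} are met for the presentation $V = \varprojlim_S V_S$, and the conclusion yields a finite isotypic decomposition of $V$ itself.

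The step I expect to be the main obstacle — and presumably the reason the intermediate result \Cref{th:lim-stab-glob} is needed in the first place — is ensuring that the finite sets of isotypes appearing in the $V_S$ do not accumulate as $S$ grows: one needs a \emph{uniform} bound on the number of isotypes. This should follow because for $S \subseteq T$ the equivariant surjection $V_T \twoheadrightarrow V_S$ forces the isotypes of $V_S$ to be a subset of those of $V_T$, and the compatibility of the decompositions under these transition maps together with the cofiltered structure pins down a single finite set of isotypes governing the whole limit; this combinatorial bookkeeping is exactly what \Cref{th:lim-stab-glob} packages in general.
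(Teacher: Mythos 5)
Your overall reduction coincides with the paper's: present $V=\prod_{i\in I}F_i$ as the cofiltered limit of its finite subproducts $V_S$ and feed that presentation into \Cref{th:lim-stab-glob}, with \Cref{pr:hme48-frechet} supplying the globalizing property of Fr\'echet spaces. But two points in your execution are genuinely off. First, the claim that the projections $p_S$ are $\bG$-equivariant ``since the action is coordinatewise'' is unjustified: the representation is an arbitrary one on the topological vector space $V$, which merely happens to be a product of Fr\'echet spaces, and nothing forces it to respect the product decomposition. The kernels $\ker p_S$ need therefore not be $\bG$-invariant, the $V_S$ need not carry induced actions, and your intermediate steps (descending local $\bG$-finiteness to $V_S$, decomposing each $V_S$) collapse. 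This non-compatibility of the action with the limit presentation is exactly the difficulty that \Cref{th:lim-stab-glob} is built to absorb --- its proof replaces each $H_i$ by $\bigcap_{g\in\bG}gH_i$ after reducing to separable $\bG$ --- and its hypotheses are purely about the TVS presentation, making no reference to the action; so those steps of yours are both unjustified and unnecessary.

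Second, the hypothesis of \Cref{th:lim-stab-glob} that does need checking is not that each $V_S$ is globalizing but that each countable power $V_S^{\aleph_0}$ is. This is easy --- $V_S^{\aleph_0}$ is a countable product of Fr\'echet spaces, hence Fr\'echet, hence globalizing by \Cref{pr:hme48-frechet} --- but it is the verification the paper makes and you omit. Relatedly, your closing account of the mechanism is not what happens: there is no need for a uniform bound on the isotypes of the $V_S$ propagated along transition maps (indeed new isotypes may well keep appearing as $S$ grows). The finiteness comes instead from \Cref{pr:equivlim} and \Cref{cor:equivlim}: once the limit is made equivariant, $V$ is the \emph{product} of its isotypic components, and infinitely many nonzero components would yield a vector that is not $\bG$-finite, contradicting the local $\bG$-finiteness of $V$ itself.
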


\subsection*{Acknowledgements}

I am grateful for comments, suggestions and pointers to the literature by K. H. Hofmann and K.-H. Neeb. 

This work is partially supported by NSF grant DMS-2001128. 


\section{Factorization through well-behaved groups}\label{se:factlie}

We make fairly free use of terminology pertinent to topological vector spaces (TVSs for short), for which the reader can consult (say) \cite{bourb_tvs,k_tvs-1,k_tvs-2,nb_tvs,rr_tvs,schae_tvs,trev_tvs} and countless others. Where the occasion demands them, the text provides specific references. Topological groups (so TVSs in particular) are assumed Hausdorff by default. 

The same goes for material on {\it uniform spaces}, as covered, say, in \cite[Chapter 7 and onward]{james_unif_1999}, \cite[\S 5]{k_tvs-1}, \cite[Chapter II]{bourb_top_en_1}, etc. In particular, we take for granted notions such as 
\begin{itemize}
\item the topology underlying a uniform structure \cite[\S II.1.2]{bourb_top_en_1};
\item the {\it left} and {\it right uniformities} \cite[\S III.3.1, Definition 1]{bourb_top_en_1} on a topological group;
\item the {\it product} uniformity \cite[\S II.2.6, Definition 4]{bourb_top_en_1} on a Cartesian product of topological spaces;
\item and the {\it completion} \cite[\S II.3.7]{bourb_top_en_1} of a Hausdorff uniform space. 
\end{itemize}

There are references to {\it (co)limits} \cite[\S 11]{ahs}, mostly in categories of TVSs. Of particular importance are
\begin{itemize}
\item {\it cofiltered limits} \spr{04AY}, i.e. limits of diagrams of the form
  \begin{equation*}
    E_i\xrightarrow{\quad\pi_{ji}\quad}E_j,\quad i\ge j\text{ in a directed (or filtered) poset }(I,\le),
  \end{equation*}
  meaning that any two elements are dominated by another;
\item and dually, {\it filtered colimits}, i.e. colimits of diagrams
  \begin{equation*}
    E_i\xrightarrow{\quad\iota_{ji}\quad}E_j,\quad i\le j\in\text{ filtered }(I,\le).
  \end{equation*}
\end{itemize}
Other terms (respectively) are {\it projective} \cite[\S 19.7]{k_tvs-1} and {\it inductive} \cite[\S 19.2]{k_tvs-1} {\it limits}. 

Although phrased in terms of representations on Banach spaces $E$ and hence morphisms into Banach Lie groups $\bU:=GL(E)$, the core of the proof of \cite[Theorem 1]{zbMATH05628052} can be reworked to deliver substantially more. 

\begin{theorem}\label{th:lcg2lie}
  \begin{enumerate}[(1)]
  \item\label{item:th:lcg2lie:cpctislie} Let $\bU$ be a (Hausdorff) topological group whose compact subgroups are finite-dimensional Lie. Any topological-group morphism $\bG\xrightarrow{\varphi}\bU$ from a locally compact group factors through a finite-dimensional Lie-group quotient of $\bG$.

  \item\label{item:th:lcg2lie:partcases} In particular, the conclusion of \Cref{item:th:lcg2lie:cpctislie} holds for {\it locally exponential} \cite[Definition IV.1.1]{neeb-lc} Lie groups $\bU$. 
  \end{enumerate}
\end{theorem}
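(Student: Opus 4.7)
Part \Cref{item:th:lcg2lie:partcases} follows immediately from \Cref{item:th:lcg2lie:cpctislie} together with the already-cited \cite[Theorem IV.3.16]{neeb-lc}, so my plan addresses only \Cref{item:th:lcg2lie:cpctislie}. The strategy combines two classical ingredients: the Gleason--Yamabe / Montgomery--Zippin structure theorem, which furnishes, for each neighborhood $U\ni 1$ in the locally compact group $\bG$, an open subgroup $\bG'\leq \bG$ and a compact normal $K\trianglelefteq \bG'$ with $K\subseteq U$ and $\bG'/K$ a finite-dimensional Lie group; and the no-small-subgroups (NSS) property of finite-dimensional Lie groups. The task is to upgrade the structural output to produce $K\subseteq \ker\varphi$ without losing the Lie-quotient condition.

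I would iterate Gleason--Yamabe twice. First pick any $K_0\trianglelefteq \bG'$ from the structure theorem; its image $\varphi(K_0)\leq \bU$ is a compact subgroup, hence by hypothesis a finite-dimensional Lie group, and therefore NSS: fix a neighborhood $W\ni 1$ inside $\varphi(K_0)$ containing no non-trivial subgroup, and extend to an open $V\subseteq \bU$ with $V\cap \varphi(K_0)\subseteq W$. A second application of Gleason--Yamabe, this time to the open neighborhood $\varphi^{-1}(V)\ni 1$, delivers $K_1\trianglelefteq \bG''$ compact with $K_1\subseteq \varphi^{-1}(V)$ and $\bG''/K_1$ Lie. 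Setting $\bG_1:=\bG'\cap \bG''$ (open in $\bG$) and $K:=K_0\cap K_1$, I expect $K\trianglelefteq \bG_1$ to be compact, $\varphi(K)\subseteq \varphi(K_0)\cap V\subseteq W$ to be a subgroup of $\varphi(K_0)$ trapped in $W$ (hence $\varphi(K)=\{1\}$ and $K\subseteq \ker\varphi$), and $\bG_1/K$ to embed continuously into $\bG'/K_0\times \bG''/K_1$ as a locally compact---hence closed---subgroup, making it a finite-dimensional Lie group by Cartan's theorem.

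Promotion from $\bG_1$ to all of $\bG$ is then essentially formal. Writing $H:=\ker\varphi$, the quotient $\bG_1/(H\cap \bG_1) = (\bG_1/K)/((H\cap \bG_1)/K)$ is finite-dimensional Lie, and openness of $\bG_1\subseteq \bG$ together with openness of $\bG\to \bG/H$ forces the canonical continuous bijection $\bG_1/(H\cap \bG_1)\to \bG_1 H/H$ to be a homeomorphism onto an open subgroup of $\bG/H$. A topological group with an open finite-dimensional Lie subgroup is itself a finite-dimensional Lie group, so $\bG/H$ is Lie and $\varphi$ factors as $\bG\twoheadrightarrow \bG/H\hookrightarrow \bU$. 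The principal obstacle is coordinating the two Gleason--Yamabe outputs inside their common open subgroup $\bG_1$ so that $K=K_0\cap K_1$ simultaneously inherits normality, compactness, Lie-quotient status, and membership in $\ker\varphi$; the hypothesis that all compact subgroups of $\bU$ are finite-dimensional Lie is exactly what activates the NSS property of $\varphi(K_0)$ and thus forces $\varphi(K)$ to be trivial.
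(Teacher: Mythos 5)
Your overall plan is sound and genuinely different from the paper's main argument, but there is a gap at the step ``$\bG_1/K$ embeds continuously into $\bG'/K_0\times\bG''/K_1$ as a locally compact---hence closed---subgroup, making it a finite-dimensional Lie group by Cartan's theorem.'' As stated this rests on a false general principle: a continuous injective homomorphism from a locally compact group into a Lie group need not have locally compact (or closed) image in the subspace topology (an irrational winding $\bR\to\bT^2$ is the standard example), and even when the image is closed the continuous bijection onto it need not be a homeomorphism (the identity map from the reals with the discrete topology to $\bR$). The homeomorphism point matters downstream: your promotion step needs $\bG_1/(\ker\varphi\cap\bG_1)$, with its quotient topology, to be Lie, so it is $\bG_1/K$ itself and not merely some continuous incarnation of it that must be identified with a closed subgroup of the product. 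The gap is repairable in your specific situation: because $K_0$ is compact the quotient map $\bG'\to\bG'/K_0$ is proper, and $\bG_1=\bG'\cap\bG''$, being an open subgroup, is closed in $\bG'$; hence $g\mapsto(gK_0,gK_1)$ is a proper map $\bG_1\to\bG'/K_0\times\bG''/K_1$, so it is closed, its image is a closed (hence, by Cartan, Lie) subgroup, and the induced bijection from $\bG_1/K$ onto the image is a homeomorphism. Alternatively, note $K=(K_0\cap\bG_1)\cap(K_1\cap\bG_1)$ with $\bG_1/(K_0\cap\bG_1)$ and $\bG_1/(K_1\cap\bG_1)$ Lie (they are open subgroups of $\bG'/K_0$ and $\bG''/K_1$), pull back no-small-subgroups neighborhoods to see that $\bG_1/K$ is a locally compact NSS group, and invoke Gleason--Montgomery--Zippin---a heavier tool. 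The rest of your argument (triviality of $\varphi(K)$ via the NSS property of the compact Lie group $\varphi(K_0)$, and the passage from $\bG_1$ to $\bG/\ker\varphi$ via the open subgroup $\bG_1\ker\varphi/\ker\varphi$) is correct.

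For comparison, the paper's proof of \Cref{item:th:lcg2lie:cpctislie} is shorter and never tries to push a compact subgroup into $\ker\varphi$: it chooses an almost connected open $\bH\le\bG$ \cite[Lemma 2.3.1]{mz}, one compact normal $\bK\trianglelefteq\bH$ with $\bH/\bK$ Lie \cite[\S 4.6, Theorem]{mz}, observes that $\varphi(\bK)$ is compact and hence Lie by hypothesis, and concludes that $\varphi|_{\bH}$ factors through an extension of the Lie group $\bH/\bK$ by the Lie group $\varphi(\bK)\cong\bK/\ker\varphi|_{\bK}$, which is Lie by Iwasawa's theorem \cite[Theorem 7]{iw}. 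Your NSS-based route is closer in spirit to the paper's alternative proof of \Cref{item:th:lcg2lie:partcases}, which however requires $\bU$ itself to have no small subgroups (e.g.\ when $Lie(\bU)$ admits a continuous norm); your observation that it suffices to apply NSS to the compact image $\varphi(K_0)$ is a nice way of making that style of argument work in the full generality of \Cref{item:th:lcg2lie:cpctislie}, at the cost of two applications of Gleason--Yamabe plus the fiber-product argument in place of one structure-theorem application plus Iwasawa's extension theorem.
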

\begin{proof}  
  Part \Cref{item:th:lcg2lie:partcases} follows immediately from \Cref{item:th:lcg2lie:cpctislie}, since the latter's hypothesis is met \cite[Theorem IV.3.16]{neeb-lc}: (even locally) compact subgroups of locally exponential Lie groups are finite-dimensional Lie. Settling, \Cref{item:th:lcg2lie:cpctislie}, then, is enough. 
  
  As noted in the proof of the already-cited \cite[Theorem 1]{zbMATH05628052}, it is enough handle the restriction of $\varphi$ to a judiciously-chosen open subgroup $\bH\le\bG$, for we would then know that $\varphi$ factors through a quotient $\bG\xrightarrowdbl{}\overline{\bG}$ having a finite-dimensional Lie quotient of $\bH$ as an open subgroup, so that $\overline{\bG}$ is itself Lie.

  We can choose \cite[Lemma 2.3.1]{mz} the aforementioned open $\bH\le \bG$ {\it almost connected} \cite[preceding Theorem 10.89]{hm4}, i.e. with $\bH/\bH_0=\bH/\bG_0$ compact. But then $\bH$ has \cite[\S 4.6, Theorem]{mz} a compact normal subgroup $\bK\trianglelefteq \bH$ with $\bH/\bK$ Lie (finite-dimensional). The (automatically compact) image $\varphi(\bK)\le \bU$ is finite-dimensional Lie by assumption, so $\varphi$ factors through the (finite-dimensional Lie \cite[Theorem 7]{iw}) extension of the Lie group $\bH/\bK$ by the Lie group $\varphi(\bK)\cong \bK/\ker\varphi|_{\bK}$. 
\end{proof}


We record an alternative argument \Cref{th:lcg2lie}\Cref{item:th:lcg2lie:partcases} assuming the Lie algebra $Lie(\bG)$ admits a continuous norm. That assumption, while weakening the result, still holds \cite[Corollary IV.1.10]{neeb-lc} for {\it Banach} Lie groups \cite[Definition IV.I]{neeb-inf}, so the result still recovers (and generalizes) \cite[Theorem 1]{zbMATH05628052} upon specializing $\bU$ to $GL(E)$ for Banach spaces $E$. 

\altpf{pf:th:lcg2lie:alt}{Proof of \Cref{th:lcg2lie}\Cref{item:th:lcg2lie:partcases} when $Lie(\bU)$ admits a continuous norm}
\begin{pf:th:lcg2lie:alt}
  The assumptions on $\bU$ ensure \cite[Problem II.5]{neeb-lc} that it contains {\it no small  subgroups} \cite[2.20.1]{mz}: there are neighborhoods $V\ni 1\in \bU$ containing no non-trivial subgroups. But then
  \begin{itemize}
  \item every subgroup of the preimage $\varphi^{-1}V\subseteq \bG$ will be annihilated by $\varphi$;
  \item which goes for a normal compact subgroup $\bK\trianglelefteq \bG'$ of an open subgroup $\bG'\le \bG$ with $\bG'/\bK$ Lie (and there are such \cite[\S 4.0]{mz});
  \item which means that $\varphi$ factors through the finite-dimensional Lie group obtained as the quotient of $\bG$ by the normal subgroup generated by $\bK$.  \qedhere
  \end{itemize}
\end{pf:th:lcg2lie:alt}

\begin{remarks}\label{res:gurarienotwrong}
  \begin{enumerate}[(1),wide=0pt]
  \item \Cref{th:lcg2lie}\Cref{item:th:lcg2lie:partcases} generalizes \cite[Theorem IV.3.16]{neeb-lc}, which says that (automatically closed) locally compact subgroups of locally-exponential Lie groups are finite-dimensional Lie. The (first) proof given above does of course appeal to \cite[Theorem IV.3.16]{neeb-lc}, but in its weaker form pertaining to {\it compact} groups. 
    
  \item Naturally, the assumed continuous norm on $Lie(\bU)$ in the second proof of \Cref{th:lcg2lie}\Cref{item:th:lcg2lie:partcases} need not induce the topology on that space. In other words, $Lie(\bU)$ need not be {\it normable} \cite[\S 11]{trev_tvs}. The space $H(\Omega)$ \cite[\S 10, Example II]{trev_tvs} of holomorphic functions on an open subset $\Omega\subseteq \bC^n$, with the topology of uniform convergence on compact sets, is \cite[Proposition 34.4]{trev_tvs} a {\it Montel space} in the sense of \cite[Definition 34.2]{trev_tvs}: Hausdorff, {\it locally convex} (\cite[\S 18.1]{k_tvs-1}, \cite[\S II.4.1]{bourb_tvs}), {\it barreled} (\cite[\S 21.2]{k_tvs-2}, \cite[Definition 33.1]{trev_tvs}) and such that every closed {\it bounded} \cite[Definition 14.1]{trev_tvs} subset is compact.
   
    Now, on the one hand normable Montel spaces are finite-dimensional \cite[paragraph following Definition 34.2]{trev_tvs}, so that $H(\Omega)$ cannot be normable if $\Omega$ is non-empty. On the other hand, holomorphic functions on {\it connected} $\Omega$ vanishing on a compact subset $K\subset\Omega$ with non-empty interior must vanish globally, so $\|\cdot\|:=\sup_{K}|\cdot|$ is a continuous norm on $H(\Omega)$.
    
  \item \cite[Footnote 1]{zbMATH05628052} expresses some concerns with the claim, in the course of the proof of \cite[Corollary 2]{gurar_unif}, that the image of a uniformly-continuous representation $\bG\xrightarrow{\varphi} GL(E)$ of a (connected) locally compact group $\bG$ on a Banach space $E$ is a (connected) locally compact group with no small subgroups.
    
    That claim seems to me correct, in the charitable interpretation of equipping the image with the intrinsic topology (i.e. the {\it quotient topology} \cite[\S I.2.4, Example 1)]{bourb_top_en_1} inherited from $\bG$) rather than the subspace topology inherited from $GL(E)$ (the contrast between these two topologies is the flag raised by \cite[Footnote 1]{zbMATH05628052}):
    \begin{itemize}
    \item $\bG/\ker\varphi$ {\it is} locally compact in the quotient topology, the kernel being a closed subgroup;

    \item and it also satisfies the no-small-subgroups condition because its image in the {\it coarser} subspace topology on
      \begin{equation*}
        \varphi\left(\bG/\ker\varphi\right)\subseteq GL(E)
      \end{equation*}
      does: plainly, the no-small-subgroup requirement survives a {\it strengthening} of the topology. 
    \end{itemize}
    For these reasons, the proof of \cite[Corollary 2]{gurar_unif} appears to me to be correct.
  \end{enumerate}
\end{remarks}

The importance in \cite{klm_unif} of working with {\it unitary} representations (as opposed to, say, representations on Banach spaces) and {\it connected} locally compact groups can be discerned by tracing through one of its main sources, \cite{zbMATH03072119}: \cite[Lemmas 1 and 2]{klm_unif} appeal to \cite[Theorem 1 and Corollary 4]{zbMATH03072119} respectively, both of which point back to \cite[Lemma 1]{zbMATH03072119}. The latter is a structure result stating that connected locally compact groups admitting families of continuous functions which
\begin{itemize}
\item are constant along conjugacy classes;
\item and separate non-trivial conjugacy classes from $\{1\}$ 
\end{itemize}
are products of compact (connected) groups and Euclidean groups $(\bR^d,+)$. In turn, the result appeals to the fact \cite[Theorem 3]{zbMATH03067716} that said products are precisely the connected locally compact groups having fundamental systems of conjugacy-invariant compact $1$-neighborhoods.

Per \cite[Corollary 4]{zbMATH03072119}, that structure result applies to connected locally compact groups with enough norm-continuous unitary representations $\bG \xrightarrow{\varphi}\bU(H)$ by applying the aforementioned \cite[Lemma 1]{zbMATH03072119} to the functions of the form $\|\varphi(\bullet)-1\|$. The unitary character of the representation is crucial here in ensuring the invariance of the functions on conjugacy classes. A relaxation of that unitary constraint thus entails a weakening of the conjugacy invariance of the functions in \cite[Lemma 1]{zbMATH03072119}; this is what motivates \Cref{th:upperhemi} below.  

Recall \cite[Definition 17.2]{zbMATH05265624} that a set-valued map $X\xrightarrow{\psi} 2^{Y}$ for topological spaces $X$ and $Y$ (sometimes \cite[Definition 17.1]{zbMATH05265624} referred to as a {\it correspondence} $X\rightsquigarrow Y$) is {\it upper hemicontinuous at $x_0\in X$} if the {\it upper inverse images} (or {\it preimages}) \cite[\S 17.1, following Definition 17.1]{zbMATH05265624} 
\begin{equation*}
  \psi_{-1}(U)
  :=
  \left\{x\in X\ |\ \psi(x)\subseteq U\right\},\quad \text{ neighborhoods }U\supseteq \psi(x_0)
\end{equation*}
are neighborhoods of $x_0$. 

\begin{remark}
  The $(-1)$-subscript notation is borrowed from \cite[\S 1]{zbMATH06329568}, where upper inverse images are termed {\it small} preimages (and {\it hemi}continuity is {\it semi}continuity). 
\end{remark}

\begin{theorem}\label{th:upperhemi}
  Let $\bG$ be a locally compact group with identity component $\bG_0$. The following conditions are equivalent.
  \begin{enumerate}[(a)]
  \item\label{item:th:upperhemi:g0inv} $1\in \bG$ has a fundamental system of $\bG_0$-conjugacy-invariant compact neighborhoods.

  \item\label{item:th:upperhemi:g0rl} $\bG_0\cong \bR^d\times \bL$ for some compact connected $\bL$ and $d\in \bZ_{\ge 0}$. 
    
  \item\label{item:th:upperhemi:openrk} There is an open $\bH\le \bG$ expressible as a cofiltered limit $\bH\cong \varprojlim_i \bH_i$ of (finite-dimensional Lie) quotients, with each $\bH_i$ containing an open subgroup of the form $\bR^d\times \bL_i$ for compact (finite-dimensional Lie) $\bL_i$.

  \item\label{item:th:upperhemi:famrealconst} There is a family of real-valued functions $\bG\xrightarrow{\varphi\in \cF}\bR$, separating $g\ne 1$ from $1$ (i.e. distinguishing element non-triviality) and constant along $\bG_0$-conjugacy classes.

  \item\label{item:th:upperhemi:famrealuhc} Same as \Cref{item:th:upperhemi:famrealconst}, but replacing the conjugacy-class constancy with the formally weaker requirement that 
    \begin{equation*}
      \widetilde{\varphi}:=
      \bG\ni g
      \xmapsto{\quad}
      \varphi\left(\text{$\bG_0$-conjugacy class of $g$}\right)
      \in 2^{\bR}
      ,\quad
      \varphi\in \cF
    \end{equation*}
    be upper hemicontinuous at $1\in \bG$.
    
  \item\label{item:th:upperhemi:famxconst} Analogue of \Cref{item:th:upperhemi:famrealconst}, but with $\varphi\in \cF$ taking values in varying Hausdorff spaces $X_{\varphi}$ in place of $\bR$. 

  \item\label{item:th:upperhemi:famxuhc} Analogue of \Cref{item:th:upperhemi:famrealuhc}, but with $\varphi\in \cF$ taking values in Hausdorff spaces $X_{\varphi}$. 
  \end{enumerate}
\end{theorem}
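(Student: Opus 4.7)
The plan is to close the cycle $(a) \Rightarrow (b) \Rightarrow (c) \Rightarrow (d) \Rightarrow (g) \Rightarrow (a)$; the remaining implications $(d) \Rightarrow (e)$, $(d) \Rightarrow (f)$, $(e) \Rightarrow (g)$, $(f) \Rightarrow (g)$ are formal, since constancy along orbits makes $\widetilde{\varphi}(g) = \{\varphi(g)\}$ upper hemicontinuous iff $\varphi$ is continuous at $1$, and $\bR$ is a Hausdorff target. The hard step is $(g) \Rightarrow (a)$, which I will handle by marrying upper hemicontinuity with the connectedness of $\bG_0$-conjugacy orbits.

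For $(a) \Rightarrow (b)$, intersect the $\bG_0$-conjugacy-invariant fundamental system with the closed subgroup $\bG_0$: on $\bG_0$, conjugation by $\bG_0$ coincides with inner conjugation, yielding a fundamental system of inner-invariant compact neighborhoods of $1$ in the connected locally compact group $\bG_0$, and the Freudenthal--Weil-type result \cite[Theorem 3]{zbMATH03067716} concludes. For $(b) \Rightarrow (c)$, choose an almost-connected open subgroup $\bH \le \bG$ via \cite[Lemma 2.3.1]{mz} and apply Gleason--Yamabe (\cite[\S 4.6]{mz}) to write $\bH \cong \varprojlim_i \bH/\bK_i$ as a cofiltered limit of finite-dimensional Lie quotients with $\bK_i \trianglelefteq \bH$ compact. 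Since the only compact subgroup of $\bR^d$ is trivial, $\bG_0 \cap \bK_i \subseteq \{0\} \times \bL$, so $(\bH/\bK_i)_0 \cong \bG_0/(\bG_0 \cap \bK_i) \cong \bR^d \times \bL_i$ with $\bL_i := \bL/(\bL \cap \bK_i)$ compact connected Lie, and this subgroup is open in the finite-dimensional Lie $\bH/\bK_i$.

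For $(c) \Rightarrow (d)$, I work one finite-dimensional Lie quotient $\bH_i := \bH/\bK_i$ at a time. Inner conjugation on the open subgroup $(\bH_i)_0 = \bR^d \times \bL_i$ fixes the central $\bR^d$ pointwise and acts on $\bL_i$ by inner automorphisms, so linear functionals on $\bR^d$ together with Peter--Weyl class functions on $\bL_i$ supply continuous $(\bH_i)_0$-invariant real functions separating non-trivial elements of $(\bH_i)_0$ from $1$. As $(\bH_i)_0$ is open-and-closed in $\bH_i$, these extend by zero; characteristic functions of the non-identity cosets of $(\bH_i)_0$ in $\bH_i$ (which $(\bH_i)_0$-conjugation preserves setwise, the subgroup being normal) handle the remaining elements. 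Pulling back via $\pi_i : \bH \to \bH_i$ and multiplying by $\chi_\bH$ (continuous since $\bH$ is open-and-closed in $\bG$) extends everything to $\bG$, producing a continuous, $\bG_0$-conjugacy-constant, separating family of real-valued functions.

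For the main step $(g) \Rightarrow (a)$, each $W_{\varphi, U} := \widetilde{\varphi}_{-1}(U)$ is by upper hemicontinuity a $\bG_0$-conjugacy-invariant open neighborhood of $1$, and Hausdorff separation of $\varphi(g) \ne \varphi(1)$ yields, for each $g \ne 1$, a member $W_{\varphi_g, U_g}$ missing $g$. Given an arbitrary neighborhood $V \ni 1$, choose a compact $K$ with $1 \in K^o \subseteq K \subseteq V$ and cover the compact boundary $\partial K$ by finitely many $\bG \setminus W_{\varphi_j, U_j}$, producing $W := \bigcap_j W_{\varphi_j, U_j}$---open, $\bG_0$-invariant, and disjoint from $\partial K$. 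The pivotal observation is that each orbit $C(x) = \{\gamma x \gamma^{-1} : \gamma \in \bG_0\}$, for $x \in W$, is connected (continuous image of the connected $\bG_0$), lies in $W$ by invariance, and misses $\partial K$; since $\bG \setminus \partial K = K^o \sqcup (\bG \setminus K)$ is a disjoint union of opens, connectedness forces $C(x) \subseteq K^o$ whenever $x \in W \cap K^o$, making $W \cap K^o$ itself $\bG_0$-invariant. Its closure, a closed subset of the compact $K$, is then the $\bG_0$-invariant compact neighborhood of $1$ inside $V$ that we need. The main difficulty---reconciling the global invariance delivered by upper hemicontinuity with the local compactness requirement---is navigated precisely by this orbit-connectedness trick.
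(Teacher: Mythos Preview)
Your argument is correct and follows essentially the same cycle as the paper, with the same appeal to Iwasawa's theorem for $(a)\Rightarrow(b)$, the Gleason--Yamabe approximation for $(b)\Rightarrow(c)$, and the same connectedness-of-orbits trick for $(g)\Rightarrow(a)$. Two minor remarks: in $(g)\Rightarrow(a)$ the sets $W_{\varphi,U}=\widetilde{\varphi}_{-1}(U)$ are only guaranteed to be \emph{neighborhoods} of $1$ (upper hemicontinuity is assumed only at $1$), not open---but your argument never actually uses openness, so this is harmless; and in $(c)\Rightarrow(d)$ you build the separating family by hand (linear functionals on $\bR^d$, Peter--Weyl characters on $\bL_i$, coset indicators), whereas the paper instead observes that the $\bG_0$-conjugation action on $\bH_i$ factors through the compact $\bL_{i,0}$, so the orbit space is locally compact Hausdorff and hence Tychonoff, supplying the functions abstractly. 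Your explicit construction is perfectly fine and arguably more concrete; the paper's route is shorter and avoids bookkeeping on the $\bG\setminus\bH$ part (which in your version is handled implicitly by the fact that characters are nonzero at the identity).
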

\begin{proof}
  \begin{description}[wide=0pt]
  \item {\bf \Cref{item:th:upperhemi:g0inv} $\xRightarrow{\quad}$ \Cref{item:th:upperhemi:g0rl}:} As noted in \cite[footnote 8]{zbMATH03072119}, this is a consequence of \cite[Theorem 3]{zbMATH03067716}.

  \item {\bf \Cref{item:th:upperhemi:g0rl} $\xRightarrow{\quad}$ \Cref{item:th:upperhemi:openrk}:} Specifically, we prove the formally stronger, Lie branch of \Cref{item:th:upperhemi:openrk}. 

    By \cite[Lemma 2.3.1]{mz} again, $\bG$ has an open almost connected subgroup $\bH\le \bG$. This will be the desired $\bH$: the claim, in particular, is that {\it any} almost connected $\bH$ will do.

    The celebrated Lie-group-approximation theorem for almost connected locally compact groups \cite[\S 4.6, Theorem]{mz} says that $\bH$ (being almost connected) is the cofiltered limit of its Lie quotients (i.e. is {\it pro-Lie} in any of the alternative senses of the term \cite[Theorem 3.39 and Definitions A, B and C preceding it]{hm_pro-lie-bk}). Note also that the identity component $\bG_0=\bH_0\cong \bR^d\times \bL$ {\it surjects} onto each identity component $\bH_{i,0}$, for instance because images of profinite groups (such as $\bH/\bH_0$) are again profinite \cite[Exercise E1.13]{hm4}.

    For an index $i$ large enough to ensure that the $\bR^d$ factor in $\bH_0\cong \bR^d\times \bL$ embeds into $\bH_{i,0}$, that identity component will be both in $\bH_i$ (because the latter is Lie) and of the desired form $\bR^d\times \bL_i$.
    
  \item {\bf \Cref{item:th:upperhemi:openrk} $\xRightarrow{\quad}$ \Cref{item:th:upperhemi:famrealconst}:} This time we hypothesize the formally weaker, non-Lie version of \Cref{item:th:upperhemi:openrk}.

    We can first declare all functions $\varphi\in \cF$ constant and non-zero on all non-trivial cosets of the given open subgroup $\bH\le \bG$, so that they are also obviously constant on the $\bG_0$-conjugacy classes outside $\bH$.

    Next, the restrictions $\varphi|_{\bH}$ will be pullbacks along the surjections $\bH\xrightarrowdbl{}\bH_i$ of functions defined on $\bH_i$, so it suffices to turn attention to an individual $\bH_i$. We will make implicit, repeated use of the fact that the identity component $\bH_0=\bG_0$ surjects onto $\bH_{i,0}$. 

    By assumption, $\bH_i$ has an open subgroup of the form $\bR^d\times \bL_i$ with compact $\bL_i$. The factor $\bR^d$ being central in
    \begin{equation*}
      \bH_{i,0}\cong \bR^d\times \bL_{i,0},
    \end{equation*}
    the conjugation action by $\bG_0$ factors through that of the {\it compact} group $\bL_{i,0}$. But then the quotient
    \begin{equation}\label{eq:quotbyconj}
      \bH_i/\left(\text{$\bG_0$-conjugation}\right)
      =
      \bH_i/\left(\text{$\bL_{i,0}$-conjugation}\right)
    \end{equation}
    (with its quotient topology) is locally compact Hausdorff \cite[\S III.4.1, Proposition 2 and its Corollary 1 and \S III.4.2, Proposition 3]{bourb_top_en_1}, hence \cite[Theorem 19.3]{wil-top} also {\it Tychonoff} \cite[Definition 14.8]{wil-top}: points can be separated from closed sets by continuous functions. We can now take the restrictions $\varphi|_{\bH}$ of $\varphi\in \cF$ to be the continuous real-valued functions on \Cref{eq:quotbyconj} with varying $i$, vanishing at (the image of) $1$, precomposed with the quotient maps $\bH\xrightarrowdbl{}\bH_i$.
    
  \item {\bf \Cref{item:th:upperhemi:famrealconst} $\xRightarrow{\quad}$ \Cref{item:th:upperhemi:famrealuhc}, \Cref{item:th:upperhemi:famxconst}, \Cref{item:th:upperhemi:famxuhc}} are all formal. One last implication will complete the picture.

  \item {\bf \Cref{item:th:upperhemi:famxuhc} $\xRightarrow{\quad}$ \Cref{item:th:upperhemi:g0inv}:} This is very much parallel to the proof of \cite[Lemma 1]{zbMATH03072119}.
    
    Fix a compact neighborhood $K\ni 1$ in $\bG$. Having chosen, for each point $p\in \partial K$ on the (compact) boundary of $K$, a function $\varphi_p\in \cF$ and disjoint neighborhoods
    \begin{equation*}
      V_{p,1}\ni \varphi_p(1)
      ,\quad
      V_{p,p}\ni \varphi_p(p)
      \quad\text{in }X_{\varphi_p}
    \end{equation*}
    (possible by the separation and Hausdorff assumptions), there is a {\it finite} cover of $\partial K$ with neighborhoods
    \begin{equation*}
      U_i:=\varphi_{p_i}^{-1}\left(V_{p_i,p_i}\right)\ni p_i
      ,\quad
      1\le i\le n.
    \end{equation*}
    The upper hemicontinuity assumption then ensures the existence of a neighborhood $U\ni 1$ in $\bG$ such that
    \begin{equation*}
      p\in U
      \xRightarrow{\quad}
      \varphi_{p_i}\left(\text{$\bG_0$-conjugacy class of $p$}\right)\subseteq V_{p_i,1}
      ,\quad \forall 1\le i\le n.
    \end{equation*}
    Because $V_{p_i,1}$ and $V_{p_i,p_i}$ are disjoint, this means that the $\bG_0$-conjugacy classes of $p\in U$ cannot intersect any of the $U_i$ and hence $\partial K$. But said conjugacy classes are connected, meaning that the union of the $\bG_0$-conjugates of $U$ is (a relatively compact neighborhood of $1$) contained in the prescribed $K$. 
  \end{description}
  This settles the mutual equivalence of all of the listed conditions. 
\end{proof}

One reason for the verbosity of \Cref{item:th:upperhemi:openrk} is that in general, when $\bG$ is not Lie, the mutually-equivalent conditions of \Cref{th:upperhemi} do not imply the existence of an open subgroup of the form $\bR^d\times \bL$ (even if $\bG$ is assumed almost connected).

\begin{example}\label{ex:noopen}
  Consider the group
  \begin{equation*}
    \bG := \bG_0\rtimes \left(\bZ/2\right)^{\aleph_0}
    ,\quad
    \bG_0 := \bR\times \bT
    ,\quad
    \bT := (\bR/\bZ)^{\aleph_0},
  \end{equation*}
  with the action defined as follows:
  \begin{itemize}
  \item the generator $\sigma_n$ of the $n$-indexed factor $\bZ/2$ acts by inversion on the $n$-indexed circle factor $(\bR/\bZ)_n$ and trivially on $(\bR/\bZ)_m$, $m\ne n$;
  \item and the same $\sigma_n$ operates on the factor $\bR\cong \bR\times \{0\}\subset \bG_0$ by
    \begin{equation*}
      \bR
      \xrightarrow[(\id,\ \text{obvious surjection onto $n^th$ component})]{\quad\sigma_n\quad}
      \bR\times (\bR/\bZ)^{\aleph_0}.
    \end{equation*}        
  \end{itemize}
  The centralizer in $\bG$ of the factor $\bR\subset \bG_0$ is precisely $\bG_0$, so in particular has infinite index in $\bG$. 
\end{example}

\begin{remark}\label{re:map}
  \Cref{ex:noopen} is very much in the spirit of \cite[Part II]{zbMATH03060242}, and can serve the same purpose: the group $\bG$ therein is {\it maximal almost periodic} (or {\it MAP} \cite[\S 1]{zbMATH03268228}: it has sufficiently many finite-dimensional unitary representations), but its {\it left and right uniform structures} \cite[\S 7.1, post Definition 7.3]{james_unif_1999} do not coincide. 
  
  Indeed, condition \Cref{item:th:upperhemi:g0rl} in \Cref{th:upperhemi} means precisely \cite[\S 32]{zbMATH03108006} that $\bG_0$ is MAP. The group $\bG$ of \Cref{ex:noopen} being almost connected, it is MAP (\cite[Theorem 2]{zbMATH03060242} or \cite[Corollary 3.1]{MR0529213}) along with its identity component $\bG_0$. 
  
  On the other hand, locally compact groups whose left and right uniformities coincide contain \cite[\S 4, Proposition]{zbMATH03060242} normal open subgroups of the form $\bR^d\times \bL$ with $\bL$ compact; the group $\bG$ of \Cref{ex:noopen} does not. 
\end{remark}

As the discussion preceding \Cref{th:upperhemi} suggests, all of this affords generalizations of various results on the structure of unitary  uniformly continuous representations on Banach spaces by connected locally compact groups. \Cref{th:enoughreps}, for instance, is meant to extend \cite[Corollary 4]{zbMATH03072119}, where the locally compact group is assumed connected and the representation unitary. Some prefatory remarks are in order. 

\begin{conventions}\label{cvs:lctvs}
  We frequently work with locally convex TVSs, which phrase we occasionally shorten to {\it LCTVS}. $\cL(E,F)$ denotes the space of continuous linear maps $E\to F$, with $\cL(E,E)$ abbreviated to $\cL(E)$. 

  Following \cite[\S 39.1]{k_tvs-2} or \cite[\S III.3.1]{bourb_tvs} or \cite[\S X.1]{bourb_top_en_2} in the broader context of uniform spaces, we write $\cL_{\fM}(E,F)$ for $\cL(E,F)$ equipped with the topology of uniform convergence on each member of a family $\fM$ of subsets of $E$ (the {\it $\fM$-topology}-topology, or {\it topology of $\fM$-convergence}).  

  We assume $\fM$ to be a {\it covering bornology} \cite[\S III.1.1, Definition 1]{bourb_tvs}: closed under taking subsets and finite unions, and with $\bigcup_{\fM}B=E$.  The $\fM$ subscripts apply also to the spaces $GL(E)$ of (topological and linear) automorphisms of $E$, as in $GL_{\fM}(E)$. The members of $\fM$ are often (but not necessarily) {\it bounded} \cite[\S 15.6]{k_tvs-1}. $\cL_{\fM}(E,F)$ is
  \begin{itemize}
  \item Hausdorff if $F$ is \cite[\S X.1.2, Proposition 1]{bourb_top_en_2} (recall that we are assuming $\fM$ covers $E$);

  \item a TVS and locally convex if $F$ is \cite[\S III.3.1, Proposition 1]{bourb_tvs}, provided linear continuous maps $E\to F$ send members of $\fM$ to bounded sets. In particular, this is the case \cite[Proposition 14.2]{trev_tvs} provided $\fM$ itself is bounded (i.e. consists of bounded sets). 
  \end{itemize}
\end{conventions}

\begin{definition}\label{def:rep}
  Let $\bG$ be a topological group and $E$ a TVS (typically but not necessarily locally convex).

  \begin{enumerate}[(1)]
  \item For a family $\fM$ as in \Cref{cvs:lctvs} an {\it $\fM$-representation} of $\bG$ on $E$ (or an $\fM$-$\bG$-representation) is a continuous morphism $\bG\to GL_{\fM}(E)$.

  \item Plain (unqualified) representations are $\fM$-representations
    \begin{equation}\label{eq:g2gls}
      \bG
      \xrightarrow{\quad}
      GL_{s}(E)
      :=
      GL_{\fM}(E)
      ,\quad
      \fM:=\left\{\text{all finite subsets}\right\}.
    \end{equation}
    The `s' subscript stands for the {\it simple} \cite[post \S 39.1(4)]{k_tvs-2} (or pointwise) topology.

    Representations are thus {\it separately continuous} \cite[\S VIII.2.1, Definition 1 (i)]{bourb_int_en_7-9}, in the sense that the induced action
    \begin{equation}\label{eq:gone}
      \bG\times E\to E
    \end{equation}
    is continuous in each variable when the other is fixed (as customary: \cite[Definition 2.1 and Remark 2.2]{hm4} or \cite[Definition 13.1.1]{dixc} or \cite[\S 2]{rob}). We thus also need

  \item {\it Jointly continuous} (or just {\it joint}) $\fM$-representations are \cite[\S VIII.2.1, Definition 1 (ii)]{bourb_int_en_7-9} those for which  \Cref{eq:gone} is continuous for the product topology on the domain.     
  \end{enumerate}
\end{definition}

\begin{theorem}\label{th:enoughreps}
  For a locally compact group $\bG$ the following conditions are equivalent.
  \begin{enumerate}[(a)]
  \item\label{item:th:enoughreps:oldconds} $\bG_0\cong \bR^d\times \bK$ for some compact connected $\bK$ and $d\in \bZ_{\ge 0}$.

  \item\label{item:th:enoughreps:hilb} $\bG$ has a {\it separating} family of unitary, norm-continuous representations, in the sense that
    \begin{equation*}
      \bG\ni g\ne 1
      \xRightarrow{\quad}
      \varphi(g)\ne 1\text{ for some }\varphi\in \cF.
    \end{equation*}
    
  \item\label{item:th:enoughreps:ban} $\bG$ has a separating family of norm-continuous representations $\bG\xrightarrow{\varphi\in \cF}GL(E)$ on Banach spaces $E=E_{\varphi}$, all bounded in the sense that $\sup_{g\in \bG}\left\|\rho(g)\right\|<\infty$ for all $\varphi\in \cF$.    
    
  \item\label{item:th:enoughreps:lctvs} $\bG$ has a separating family of $\fM$-representations $\bG\xrightarrow{\varphi\in \cF}GL_{\fM}(E)$ on topological vector spaces $E=E_{\varphi}$ such that
    \begin{itemize}
    \item for each $\varphi\in \cF$, the family $\varphi(\bG_0)\subset GL_{\fM}(E)$ is {\it equicontinuous} \cite[\S 15.13]{k_tvs-1} (equivalently \cite[\S 15.13(1)]{k_tvs-1}, {\it equicontinuous at $0\in E$}):

      \begin{equation}\label{eq:equicontat0}
        \forall\text{ neighborhood }U\ni 0,\quad\exists\text{ neighborhood }V\ni 0 \text{ with }\varphi(\bG_0)V\subseteq U.
      \end{equation}
      
    \item and every $\varphi|_{\bG_0}$ leaves $\fM$ invariant:
      \begin{equation}\label{eq:fmg0inv}
        \varphi(\bG_0)M\in \fM
        ,\quad
        \forall M\in \fM
        ,\quad
        \forall \varphi\in \cF. 
      \end{equation}
    \end{itemize}
  \end{enumerate}  
\end{theorem}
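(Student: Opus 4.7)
The plan is to close the cycle (b) $\Rightarrow$ (c) $\Rightarrow$ (d) $\Rightarrow$ (a) $\Rightarrow$ (b), with the non-trivial arrows (d) $\Rightarrow$ (a) and (a) $\Rightarrow$ (b) both resting on \Cref{th:upperhemi}.

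The first two arrows are essentially formal. Any unitary Hilbert-space representation is a bounded norm-continuous Banach representation with $\|\varphi(g)\|=1$, giving (b) $\Rightarrow$ (c). For (c) $\Rightarrow$ (d) I would take $\fM$ to be the bornology of bounded subsets of the Banach space, so that norm-continuity coincides with $\fM$-continuity, and the uniform bound $\sup_{g\in\bG}\|\varphi(g)\|<\infty$ simultaneously supplies equicontinuity of $\varphi(\bG)\supseteq\varphi(\bG_0)$ at $0$ and the $\fM$-invariance \Cref{eq:fmg0inv}, since bounded sets stay bounded under uniformly bounded families of operators.

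For (d) $\Rightarrow$ (a) I would apply \Cref{th:upperhemi}\Cref{item:th:upperhemi:famxuhc} with the $\varphi\in\cF$ themselves as the functions into the Hausdorff spaces $X_\varphi:=GL_\fM(E_\varphi)$. Separation of $1$ from any other $g\in\bG$ is hypothesized directly. The crux is upper hemicontinuity of $\widetilde\varphi$ at $1$: a basic neighborhood $U\ni I$ in $GL_\fM(E_\varphi)$ is of the form $\{T:(T-I)M\subseteq V\}$ for some $M\in\fM$ and $0$-neighborhood $V\subseteq E_\varphi$. From the identity
\begin{equation*}
\varphi(xgx^{-1})-I=\varphi(x)\bigl(\varphi(g)-I\bigr)\varphi(x)^{-1},\qquad x\in\bG_0,
\end{equation*}
I would first invoke $\fM$-invariance to note that $M':=\varphi(\bG_0)M\in\fM$ contains every $\varphi(x)^{-1}M$ for $x\in\bG_0$; next use equicontinuity \Cref{eq:equicontat0} to pick a $0$-neighborhood $V'$ with $\varphi(\bG_0)V'\subseteq V$; and finally use continuity of $\varphi$ in the $\fM$-topology to find a neighborhood $W\ni 1$ with $(\varphi(g)-I)M'\subseteq V'$ for $g\in W$. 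Chaining these inclusions yields $\varphi(xgx^{-1})\in U$ uniformly in $x\in\bG_0$ and $g\in W$, as required.

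The remaining arrow (a) $\Rightarrow$ (b) is the substantive one and leans on the Lie-approximation content of \Cref{th:upperhemi}. From (a), \Cref{th:upperhemi}\Cref{item:th:upperhemi:openrk} supplies an open almost-connected subgroup $\bH\le\bG$ realised as $\bH\cong\varprojlim_i\bH_i$ with each $\bH_i$ a finite-dimensional Lie group whose identity component is $\bR^d\times\bL_i$ for a compact connected Lie $\bL_i$. Because $\bH$ is almost connected, each $\bH_i/\bH_{i,0}$ is finite, so $\bH_i$ is a finite extension of the MAP group $\bR^d\times\bL_i$; averaging an inner product on $\bR^d$ renders the action of $\bH_i/\bH_{i,0}$ orthogonal, and a standard argument then shows $\bH_i$ is itself MAP. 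Its finite-dimensional continuous unitary representations are automatically norm-continuous (being smooth on a Lie group), and pulling back through $\bH\twoheadrightarrow\bH_i$ produces a separating family of norm-continuous unitary representations of $\bH$. To extend from $\bH$ to $\bG$ I would induce from the open subgroup; elements of $\bG\setminus\bH$ are then separated by the permutation component on the discrete coset space $\bG/\bH$. The delicate point---and the step I expect to be the principal obstacle---is verifying norm-continuity of $\operatorname{Ind}_\bH^\bG\pi$: induction from a non-normal open subgroup only gives strong continuity in general, so one must exploit the finite-dimensionality of $\pi$ together with the structural control on $\bH$ to obtain the required uniform-in-coset conjugation estimates.
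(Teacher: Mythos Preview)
Your cycle (b) $\Rightarrow$ (c) $\Rightarrow$ (d) $\Rightarrow$ (a) $\Rightarrow$ (b) and its justifications match the paper's proof essentially verbatim: the easy implications are handled the same way, and (d) $\Rightarrow$ (a) goes through \Cref{th:upperhemi}\Cref{item:th:upperhemi:famxuhc} via the identity $\varphi(xgx^{-1})-I=\varphi(x)\bigl(\varphi(g)-I\bigr)\varphi(x)^{-1}$ exactly as you propose. For (a) $\Rightarrow$ (b) the paper also reduces to the open almost-connected $\bH$ of \Cref{th:upperhemi}\Cref{item:th:upperhemi:openrk}, builds finite-dimensional unitary representations there, and induces to $\bG$, declaring the check that norm-continuity survives induction ``a simple matter''.

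You are right to flag that last step as the principal obstacle, and in fact it fails outright. Take $\bG=\bR\rtimes\bZ$ with $n\in\bZ$ acting on $\bR$ by $x\mapsto 2^nx$. Then $\bG_0=\bR$, so (a) holds, and the only open almost-connected subgroup is $\bH=\bR$ itself. Inducing the character $x\mapsto e^{ix}$ gives a diagonal unitary on $\ell^2(\bZ)$ with eigenvalues $e^{ix/2^n}$; as $n\to-\infty$ these sweep out the whole circle, so $\|\mathrm{Ind}(x,0)-I\|=2$ for every $x\ne 0$. Worse, \emph{no} norm-continuous unitary $\pi$ of this $\bG$ is nontrivial on $\bG_0$: writing $\pi(x,0)=e^{ixA}$ with $A$ bounded self-adjoint, the relation $\pi(0,1)A\pi(0,1)^{-1}=2A$ forces $\|A\|=2\|A\|$, hence $A=0$. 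Thus (b) fails for this $\bG$ while (a) holds, and the implication (a) $\Rightarrow$ (b)---hence the theorem as stated---appears to be incorrect. The gap is shared by the paper's proof and your proposal; neither the finite-dimensionality of the $\bH$-representations nor the structural control on $\bH$ supplies the uniform-in-coset estimate you were hoping for, because conjugation by coset representatives outside $\bH$ can dilate $\bG_0$ without bound.
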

\begin{proof}
  \begin{description}[wide=0pt]

  \item {\bf \Cref{item:th:enoughreps:oldconds} $\xRightarrow{\quad}$ \Cref{item:th:enoughreps:hilb}:} The hypothesis makes available all of the various mutually-equivalent conditions of \Cref{th:upperhemi}, and in particular \Cref{item:th:upperhemi:openrk}. Observe first that it will be enough to work with the open subgroup $\bH\le \bG$ provided by that condition: assuming we have such representations $\bH\xrightarrow{\varphi\in \cF}U(E_{\varphi})$ (unitary groups of the Hilbert spaces $E_{\varphi}$), we can {\it induce} them up to representations
    \begin{equation*}
      \bG
      \xrightarrow{\quad\Ind_{\bH}^{\bG}\varphi\quad}
      GL(F_{\varphi})
      \quad\text{for appropriate }F_{\varphi}.
    \end{equation*}
    The openness of $\bH$ will obviate many of the complications \cite[\S E.1]{bdv} attendant to induction from arbitrary closed subgroups, so we can follow \cite[\S 2.1]{kt}. Having fixed a unitary representation $\varphi\in \cF$ on $E=E_{\varphi}$, set
    \begin{equation*}
      F=F_{\varphi}:=
      \left\{
        \bG\xrightarrow{f}E
        \ |\
        f(gh) = \varphi(h^{-1})f(g),\ \forall g\in G,\ h\in H
        \quad\text{and}\quad
        \sum_{x\in \bG/\bH}\|f(\widetilde{x})\|^2<\infty
      \right\},
    \end{equation*}
    where $\widetilde{x}\in \bG$ are arbitrary lifts of $x\in \bG$. The lifts make no difference to $\|f(\widetilde{x})\|$ because $\varphi$ is unitary. The action of $\bG$ on $F$ is (as expected)
    \begin{equation*}
      \Ind_{\bH}^{\bG}\varphi(g)f := f(g^{-1}\bullet),
    \end{equation*}
    and verifying that the desired properties transport over from the family $\{\varphi\}$ to its induced counterpart is a simple matter.
    
    Having reduced the problem to the $\bH$ of \Cref{th:upperhemi}\Cref{item:th:upperhemi:openrk}, we can further pass on to the quotients $\bH_i$ (since we can regard $\bH_i$-representations as $\bH$-representations), and then reduce again to the open subgroups $\bR^d\times \bL_i\le \bH_i$.
    
    All in all, it is enough to focus on groups of the form $\bR^d\times \bK$, for which the claim is obvious: simply take the $\varphi$ to be external tensor products of irreducible unitary $\bR^d$- and $\bK$-representations, both of which will be finite-dimensional (and hence norm-continuous). Or: this is the easy direction of the characterization \cite[\S 32]{zbMATH03108006} of connected locally compact maximally almost periodic groups, and goes through verbatim in the disconnected case as well. 
    
  \item {\bf \Cref{item:th:enoughreps:hilb} $\xRightarrow{\quad}$ \Cref{item:th:enoughreps:ban}} is immediate.

  \item {\bf \Cref{item:th:enoughreps:ban} $\xRightarrow{\quad}$ \Cref{item:th:enoughreps:lctvs}:} The family of \Cref{item:th:enoughreps:ban} satisfies the conditions in \Cref{item:th:enoughreps:lctvs}. Indeed, norm-continuity means continuity for the {\it strong} \cite[post \S 39.1(4)]{k_tvs-2} topology
    \begin{equation*}
      GL_b(E)
      :=
      GL_{\fM}(E)
      ,\quad
      \fM:=\left\{\text{all bounded subsets}\right\}.
    \end{equation*}
    For Banach (or indeed, complete metrizable \cite[\S 15.13(2)]{k_tvs-1}) spaces $E$ equicontinuity is precisely uniform boundedness, which then also implies \Cref{eq:fmg0inv}.

  \item {\bf \Cref{item:th:enoughreps:lctvs} $\xRightarrow{\quad}$ \Cref{item:th:enoughreps:oldconds}:} It will be most convenient to check condition \Cref{item:th:upperhemi:famxuhc} of \Cref{th:upperhemi}. Indeed, simply take the requisite $\varphi$ to be the representations provided by the hypothesis, i.e.
    \begin{equation*}
      \bG\xrightarrow{\quad\varphi\quad}\cL_{\fM}(E_{\varphi})
      ,\quad
      \varphi\in \cF.
    \end{equation*}
    The separation requirement is built into the hypothesis, so the one (mildly) sticking point is upper hemicontinuity. Verifying it means showing that for fixed $\varphi\in \cF$ operating on $E:=E_{\varphi}$
    \begin{equation*}
      \bG\ni g
      \text{ close to }1
      \quad
      \xRightarrow{\quad}
      \quad
      \varphi\left(Ad_{\bG_0}(g)\right)\subset\text{ small neighborhood }U\ni 1\in \cL_{\fM}(E),
    \end{equation*}
    where
    \begin{equation*}
      Ad_{\bG_0}(g):=\left\{sgs^{-1}\ |\ s\in \bG_0\right\}
    \end{equation*}
    is the $\bG_0$-conjugacy class of $g$. Being contained in $U$, in turn, means that all operators in $\varphi\left(Ad_{\bG_0}(g)\right)$ are uniformly close to the identity on a fixed bounded subset $M\in \fM$. We write `$\sim$' for `is close to'. The conclusion follows from the two constraints imposed on the individual $\varphi$:
    \begin{itemize}
    \item the set $\varphi(\bG_0)M$ still belongs to $\fM$ by \Cref{eq:fmg0inv};

    \item so that $g\in \bG$ close to $1$ are uniformly close to the identity on that set:
      \begin{equation*}
        \varphi(g)\varphi(s^{-1})
        \sim
        \varphi(s^{-1})
        \text{ uniformly on $M$ over $g\sim 1$};
      \end{equation*}

    \item whence
      \begin{equation*}
        \varphi(s)\varphi(g)\varphi(s^{-1})
        \sim
        \varphi(s)\varphi(s^{-1})
        =1
        \text{ uniformly on $M$ over $g\sim 1$}
      \end{equation*}
      by the equicontinuity assumption. 
    \end{itemize}
  \end{description}
\end{proof}

\begin{remark}\label{re:equicont-bdd}
  The two conditions \Cref{eq:equicontat0} and \Cref{eq:fmg0inv} of \Cref{th:enoughreps}\Cref{item:th:enoughreps:lctvs} are not exactly unrelated. For one thing, assuming
  \begin{equation}\label{eq:mallbdd}
    \fM:=\left\{\text{all bounded sets}\right\},
  \end{equation}
  the equicontinuity requirement \Cref{eq:equicontat0} plainly implies the boundedness condition \Cref{eq:fmg0inv}: having fixed bounded $B\subseteq E$ and a neighborhood $U\ni 0\in E$,
  \begin{equation*}
    \begin{aligned}
      U
      &\supseteq
      \varphi(\bG_0)V
      \quad\text{for some origin neighborhood $V$ by \Cref{eq:equicontat0}}\\
      &\supseteq
        \lambda \varphi(\bG_0)B
        \quad\text{for small scalars $\lambda$ because $B\subseteq \frac{1}{\lambda}V$ is bounded},\\
    \end{aligned}    
  \end{equation*}
 and hence $\varphi(\bG_0)B$ is bounded. 

 If in addition (to \Cref{eq:mallbdd}) $E$ is {\it locally bounded} (i.e. has a bounded origin neighborhood; equivalently \cite[\S 15.10]{k_tvs-1}: $E$ is {\it quasi-normable} or {\it $p$-normable} for some $0<p\le 1$) then \Cref{eq:equicontat0,eq:fmg0inv} are easily proven equivalent. In particular, this applies to normable spaces (= locally bounded + locally convex \cite[\S 15.10(4)]{k_tvs-1}). Nevertheless, in the generality of \Cref{th:enoughreps}\Cref{item:th:enoughreps:lctvs} the two conditions are independent: \Cref{ex:equicont-not-bdd,ex:bdd-not-equicont} show that each can in principle occur without the other.
\end{remark}

\begin{example}\label{ex:equicont-not-bdd}
  Assuming a compact group $\bG$ acts on $E$ {\it jointly} continuously via $\bG\xrightarrow{\varphi}GL_{\fM}(E)$, the family $\varphi(\bG)$ is easily seen to be equicontinuous \cite[\S VIII.2.1, Remark 2) ]{bourb_tvs}. On the other hand, if, say, $\fM$ is the family of finite sets and $\bG_0$ has at least one infinite orbit, then of course $\fM$ will not be preserved by the action in the sense of \Cref{eq:fmg0inv}. 
\end{example}

\begin{example}\label{ex:bdd-not-equicont}
  Assuming \Cref{eq:fmg0inv} even for the family $\fM$ of {\it all} bounded sets, \Cref{eq:equicontat0} need not follow. The subsequent construction will do double duty in \Cref{ex:hm-joint-cont:pro-cast-not-jointly-cont}. We work over the complex numbers, so as to fall within the scope of \cite{phil_inv-lim}. Specifically, we adapt \cite[Example 2.11]{phil_inv-lim} to the present purposes.

  Take $\bG:=\bS^1$, say, and let $E$ be the {\it pro-$C^*$-algebra} \cite[Definition 1.1]{phil_inv-lim} of functions on $\bG$ defined by analogy to the aforementioned example (where the base space is $[0,1]$ instead of the circle):
  \begin{equation*}
    E:=\cC_{\bC}(\bG):=\left\{\text{continuous functions }\bG\to \bC\right\},
  \end{equation*}
  with the topology of uniform convergence on every countable compact subset $F\subset \bG$ with finitely many cluster points (call such $F\subset \bG$ {\it distinguished}, per \cite[Definition 2.5]{phil_inv-lim}). The topology on $E$ is complete and locally convex (by general theory \cite[\S 19.8(1) and \S 19.10(2)]{k_tvs-1}: $E$ is a cofiltered limit of $C^*$-algebras $\cC_{\bC}(F)$). $\bG$ acts on $E$ by translation on the common domain $\bG$ of the functions.

  The bounded subsets $B\subseteq E$ are precisely those bounded in the usual sense:
  \begin{equation*}
    \sup_{f\in B,\ x\in \bG}|f(x)|<\infty. 
  \end{equation*}
  Indeed, abstract boundedness means \cite[Proposition 14.5]{trev_tvs} uniform boundedness on every distinguished $F\subseteq \bG$, and a sequence $x_n\in \bG=\bS^1$ with
  \begin{equation*}
    |f_n(x_n)|\xrightarrow[n]{\quad}\infty,\quad f_n\in B
  \end{equation*}
  would entail unboundedness on the distinguished $F$ consisting of a convergent subsequence of $(x_n)$ together with its limit. It follows from this that \Cref{eq:fmg0inv} holds: naturally, translations of sets of functions uniformly bounded on $\bG$ are again such. \Cref{eq:equicontat0} does not hold though: no matter how large a distinguished $F\subset \bG$ is, there are \cite[Theorems 15.8 and 17.10]{wil-top} $f\in E$ vanishing on $F$ and taking the value $1$ at some fixed $x\in \bG\setminus F$. 
\end{example}

Recall (e.g. \cite[\S 0]{zbMATH03680105}) that the {\it von Neumann kernel} $\fn(\bG)\trianglelefteq \bG$ of a locally compact group $\bG$ is the intersection of the kernels of finite-dimensional unitary $\bG$-representations; it is a closed characteristic subgroup, as observed in loc.cit. \cite[Lemmas 1 and 2]{klm_unif} can be rephrased as saying that for connected locally compact $\bG$, $\fn(\bG)$ can also be recovered as the intersection of the kernels of all norm-continuous unitary representations (on perhaps infinite-dimensional Hilbert spaces). The following immediate consequence of \Cref{th:enoughreps} amplifies that result. 

\begin{corollary}\label{cor:vnker-tvs}
  For a locally compact group $\bG$, the von Neumann kernel $\fn(\bG_0)$ of its identity component is precisely the joint kernel of all 
  \begin{enumerate}[(a)]
  \item norm-continuous unitary $\bG$-representations;
  \item or norm-continuous, bounded representations on Banach spaces;
  \item or representations $\bG\xrightarrow{\varphi} GL_{\fM}(E)$ on topological vector spaces with $\varphi|_{\bG_0}$ equicontinuous and $\fM$-preserving in the sense of \Cref{eq:equicontat0} and \Cref{eq:fmg0inv} respectively.  \qedhere
  \end{enumerate}
\end{corollary}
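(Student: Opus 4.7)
The corollary should fall out of \Cref{th:enoughreps} once the equality ``joint kernel $= \fn(\bG_0)$'' is split into two inclusions. Denote the three joint kernels by $N_{(a)}, N_{(b)}, N_{(c)}$. Every norm-continuous unitary $\bG$-representation is a norm-continuous bounded Banach representation, and (cf.\ \Cref{re:equicont-bdd}) every such is in turn a type-(c) representation with $\fM$ the family of bounded subsets; the three families thus grow $(a)\subseteq(b)\subseteq(c)$, and the joint kernels correspondingly shrink, $N_{(a)}\supseteq N_{(b)}\supseteq N_{(c)}$. It therefore suffices to establish $N_{(a)}\subseteq \fn(\bG_0)$ together with $\fn(\bG_0)\subseteq N_{(c)}$.

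For $N_{(a)}\subseteq \fn(\bG_0)$ I would apply \Cref{th:enoughreps} to $\bG/\fn(\bG_0)$. Its identity component is $\bG_0/\fn(\bG_0)$: the further quotient $(\bG/\fn(\bG_0))/(\bG_0/\fn(\bG_0)) \cong \bG/\bG_0$ is totally disconnected, so the identity component of $\bG/\fn(\bG_0)$ cannot escape the closed connected subgroup $\bG_0/\fn(\bG_0)$ and must equal it. By the classical structure theorem for the von Neumann kernel of a connected locally compact group, $\bG_0/\fn(\bG_0) \cong \bR^d\times \bK$ with $\bK$ compact connected, so \Cref{th:enoughreps} furnishes a separating family of type-(a) representations of $\bG/\fn(\bG_0)$. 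Their pullbacks along $\bG\to \bG/\fn(\bG_0)$ are type-(a) $\bG$-representations with joint kernel exactly $\fn(\bG_0)$.

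For $\fn(\bG_0)\subseteq N_{(c)}$ I would restrict to $\bG_0$. Each defining property of a type-(c) $\bG$-representation (continuity for the $\fM$-topology, equicontinuity of the $\bG_0$-image, $\fM$-preservation under $\bG_0$-translation) is inherited on passage to the closed subgroup $\bG_0$, so restriction sends type-(c) $\bG$-representations to type-(c) $\bG_0$-representations. Writing $N_0$ for the joint kernel of all type-(c) $\bG_0$-representations, the inclusion of families yields $N_0 \subseteq N_{(c)}$ inside $\bG$, so it is enough to verify $\fn(\bG_0)\subseteq N_0$. The type-(c) $\bG_0$-representations factor through and separate $\bG_0/N_0$, which (being a quotient of the connected $\bG_0$) is connected and coincides with its own identity component; \Cref{th:enoughreps} applied to $\bG_0/N_0$ then forces $\bG_0/N_0 \cong \bR^d\times \bK$. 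The quotient is thus maximally almost periodic and has trivial von Neumann kernel, i.e.\ $\fn(\bG_0)\subseteq N_0$, as required.

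The substantive content is almost entirely packaged in \Cref{th:enoughreps}; the only non-formal checkpoints are the identity-component computation $(\bG/\fn(\bG_0))_0 = \bG_0/\fn(\bG_0)$ (immediate from total-disconnectedness of $\bG/\bG_0$) and the routine inheritance of the type-(c) conditions under restriction to the closed subgroup $\bG_0$, so no step is expected to be a serious obstacle.
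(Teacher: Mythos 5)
Your proposal is correct and matches what the paper intends: the corollary is presented there as an immediate consequence of \Cref{th:enoughreps}, and your two extreme inclusions -- pulling back a separating family of norm-continuous unitary representations from $\bG/\fn(\bG_0)$ via (a)$\Rightarrow$(b) after identifying $(\bG/\fn(\bG_0))_0=\bG_0/\fn(\bG_0)\cong\bR^d\times\bK$, and restricting type-(c) representations to $\bG_0$ and applying (d)$\Rightarrow$(a) to $\bG_0/N_0$ -- are exactly the routine details the paper leaves implicit. No gap beyond what the cited theorem itself is responsible for.
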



\section{Locally and globally finite representations}\label{se:hm4-e48}

Recall \cite[Exercise E4.8]{hm4}: a representation of a compact group $\bG$ on a (real) {\it weakly complete} \cite[Definition A7.8]{hm4} TVS $E$ (i.e. a topological product of real lines) for which the space
\begin{equation}\label{eq:efin}
  E_{fin}:=\left\{v\in E\ |\ \dim \spn \bG v<\infty\right\}
\end{equation}
of {\it $\bG$-finite} \cite[Definition 3.1]{hm4} vectors is all of $E$ has finitely many {\it isotypic components} \cite[Definition 4.21]{hm4}. Taking a cue from that source, the present section's aim is to analyze that automatic-finiteness phenomenon in more depth.

\begin{remarks}\label{res:hm-joint-cont}
  \begin{enumerate}[(1),wide=0pt]
    
  \item\label{item:res:hm-joint-cont:wc-barreled} As noted in \Cref{cvs:lctvs}, actions $\bG\times E\to E$ attached to representations of topological groups $\bG$ on TVSs $E$ are assumed {\it separately} continuous in \cite[Definition 2.1]{hm4}. On the other hand, the hint provided for \cite[Exercise E4.8]{hm4} appeals to \cite[Theorem 4.22]{hm4}, valid for {\it jointly} continuous representations (and in general {\it not} without that hypothesis: \Cref{ex:hm-joint-cont:pro-cast-not-jointly-cont}).
  
    The reason why all is well is that being products of barreled spaces, weakly complete TVSs are again barreled \cite[\S 27.1(5)]{k_tvs-2} (indeed, Montel \cite[\S 27.2(4)]{k_tvs-1}), and hence separately continuous representations thereon of locally compact groups are automatically jointly continuous by \cite[\S VIII.2.1, Proposition 1]{bourb_int_en_7-9} or \cite[Proposition 13.2]{rob}.
    
  \item\label{item:res:hm-joint-cont:wc-confusion} The term {\it weakly complete} recalled in \Cref{item:res:hm-joint-cont:wc-barreled} above is liable to cause some confusion: it is obviously used differently in \cite[\S 10.6(6)]{k_tvs-1}, where infinite-dimensional spaces are {\it never} weakly complete (in that text's sense). The difference lies in this: the weakly complete spaces of \cite[Lemma A7.7 and Definition A7.8]{hm4} are cofiltered limits of finite-dimensional real or complex vector spaces with their {\it standard} topologies, whereas in \cite[\S 10.4(8)]{k_tvs-1} the topologies in question have closed cofinite subspaces as a basis of neighborhoods for the origin, so that the resulting finite-dimensional quotients are {\it discrete}.

    In short: the difference between \cite[Appendix 7]{hm4} and \cite[\S 10]{k_tvs-1} is that between equipping the ground field ($\bR$ or $\bC$ in the cases of interest here) with the standard or the discrete topology respectively. 

  \item\label{item:res:hm-joint-cont:pro-cast-not-jointly-cont} In reference to \Cref{item:res:hm-joint-cont:wc-barreled} above, it might be worth pointing out to what extent joint continuity is necessary in the decomposition \cite[Theorem 4.22]{hm4}, for compact $\bG$, of a $\bG$-representation as a completed direct sum of isotypic components. Unwinding that argument back to \cite[Theorem 3.36(vi)]{hm4}, it becomes apparent that joint continuity is appealed to in arguing that the idempotent {\it averaging operator} \cite[Definition 3.32]{hm4}
    \begin{equation}\label{eq:averageop}
      E\ni v
      \xmapsto{\quad}
      \int_{\bG}gv\ \mathrm{d}\mu_{Haar}(g)
      \in E,
    \end{equation}
    picking out the $\bG$-invariant elements of $E$, is continuous. \Cref{ex:hm-joint-cont:pro-cast-not-jointly-cont} shows that this is no longer the case absent {\it joint} continuity. 
  \end{enumerate}
\end{remarks}

\begin{example}\label{ex:hm-joint-cont:pro-cast-not-jointly-cont} \Cref{ex:bdd-not-equicont} will do for this purpose as well: $E$ consists of
  \begin{equation*}
    \bG:=\bS^1\xrightarrow[\quad\text{continuous}\quad]{f}\bC
  \end{equation*}
  with the topology of uniform convergence on every distinguished (meaning countable with finitely many cluster points) closed $F\subset \bG$, and $\bG$ acts by translation. The separate continuity of the action is checked immediately, whereas {\it joint} continuity decidedly fails. This can be verified
  \begin{itemize}
  \item directly, by dualizing \cite[Theorem 2.7]{phil_inv-lim} the action $\bG\times E\to E$ to the translation action $\bG\times \bG\to \bG$ and observing that the latter does not preserve the family of distinguished compact sets $F\subset \bG$ mentioned above;

  \item or, more to the point and downstream from \cite[Theorem 3.36(vi)]{hm4}, by noting that the averaging operator \Cref{eq:averageop} is not continuous because its kernel
    \begin{equation*}
      \left\{f\in \cC_{\bC}(\bG)\ \bigg|\ \int_{\bG}f\ \mathrm{d}\mu_{Haar} = 0\right\}
    \end{equation*}
    is not closed in the topology used here. 
  \end{itemize}
  Indeed, said kernel is dense in $E$: for every continuous $\bG\xrightarrow{f}\bC$ and every distinguished $F\subset \bG$ one can alter $f$ outside $F$ so as to obtain a function $f'$ with vanishing integral but with $f'|_F\equiv f|_F$.
\end{example}

\begin{remark}\label{re:notbarreled}
  Naturally, by \Cref{res:hm-joint-cont}\Cref{item:res:hm-joint-cont:wc-barreled}, the space $E$ of \Cref{ex:hm-joint-cont:pro-cast-not-jointly-cont} is not barreled. This also follows directly from \cite[Remark 4.1]{inoue_loc}, giving a necessary condition for a commutative pro-$C^*$-algebra to be barreled
\end{remark}

We dwell (via \Cref{ex:actonpolys}) on the issue of joint versus separate continuity for actions for a moment, because it acts as a kind of motivating nexus for a number of other problems one is naturally led to consider in the present context: the (non-)extensibility of an action on a (locally convex) TVS to its {\it completion} \cite[\S 15.3]{k_tvs-1}, the interplay between being barreled and normed (\Cref{res:manymore}\Cref{item:res:manymore:normed-not-barr}), the strict gradation of completeness properties \cite[Table 3.1]{hm4} pertinent to actions, and so on.

\begin{example}\label{ex:actonpolys}
  This is again a separately but not jointly continuous action, where the main ingredient is a completeness deficiency.
  
  Consider the rotation action of $\bG:=\bS^1\subset \bC$ on the space $E$ of (complex, say) polynomials on $\bC$, topologized with the supremum norm over a proper compact arc $A\subset \bS^1$. Non-zero polynomials being non-zero on $A$, this is a norm and the space $E$ is normed but not complete. The rotation action is also easily checked to indeed be a representation, i.e. separately continuous as a map $\bG\times E\to E$.

  The completion $\overline{E}$ is the space $\cC_{\bC}(A)$ of complex-valued continuous functions on $A$ with its usual supremum norm $\sup_A|\cdot|$, and the rotation action on $E$ obviously does not extend (even {\it separately} continuously) to $\overline{E}$. The original action, then, could not have been {\it jointly} continuous: all such actions extend to completions automatically (\Cref{le:ext2compl}).  
\end{example}

\begin{remarks}\label{res:manymore}
  \begin{enumerate}[(1),wide=0pt]
  \item\label{item:res:manymore:gc} The reader can easily extrapolate from \Cref{ex:actonpolys} to much broader classes. One could, for instance, substitute
    \begin{itemize}
    \item any compact Lie group $\bG$ for $\bS^1$, embedded in its associated complex algebraic group $\bG_{\bC}$ (the embedding $\fG\subset \fM(\fG)$ of \cite[Chapter VI, \S VIII, Definition 1]{chev_lie-bk_1946});
    \item any compact $A\subset \bG_{\bC}$ with non-empty interior for the arc $A\subset \bS^1$;

    \item and any number of algebras of ``sufficiently regular'' functions on $\bG_{\bC}$ for the polynomials of \Cref{ex:actonpolys}: the holomorphic functions on the complex manifold $\bG_{\bC}$ are one option, and the {\it representative} \cite[Definition 3.3]{hm4} functions (elements of the {\it representative ring} of $\bG$ in \cite[Chapter VI, \S VII]{chev_lie-bk_1946}) are another. 
    \end{itemize}

  \item\label{item:res:manymore:normed-not-barr} Being normed, the space $E$ of \Cref{ex:actonpolys} is by \cite[\S 28.1(4)]{k_tvs-1} also {\it bornological} \cite[\S 28.1]{k_tvs-1}: absolutely convex sets which {\it absorb} \cite[post \S 23.4(2)]{k_tvs-1} arbitrary bounded sets are origin neighborhoods. It cannot be barreled, for then the separately continuous action would automatically be jointly so (\cite[\S VIII.2.1, Proposition 1]{bourb_int_en_7-9} again).

    \Cref{ex:actonpolys} and its analogues, then, provide natural instances of bornological (indeed, normed) spaces that are not barreled; cf. \cite[\S 21.5, concluding paragraphs]{k_tvs-1} for another such. 
  \end{enumerate}
\end{remarks}

The following (presumably well-known) result is included here for ease of access and reference. 

\begin{lemma}\label{le:ext2compl}
  Let 
  \begin{equation}\label{eq:gonx}
    \bG\times X
    \xrightarrow{\quad\triangleright\quad}
    X
  \end{equation}
  be a continuous action of a locally compact group on a Hausdorff uniform space, with each $X\xrightarrow{g\triangleright}X$ uniformly continuous. 

  The canonical extension $\bG\times \widehat{X}\xrightarrow{} \widehat{X}$ of $\triangleright$ to the completion of $X$ is again continuous.
\end{lemma}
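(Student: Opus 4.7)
I would first construct the extension and then verify joint continuity.

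The extension is immediate: each uniformly continuous $g\triangleright : X \to X$ extends uniquely to a uniformly continuous $\widetilde{g} : \widehat{X} \to \widehat{X}$ by the universal property of the completion of a Hausdorff uniform space \cite[\S II.3.7]{bourb_top_en_1}. Setting $g\cdot \hat{x} := \widetilde{g}(\hat{x})$ gives the candidate extension, and $g \mapsto \widetilde{g}$ is a group action because the relations $\widetilde{gh} = \widetilde{g} \circ \widetilde{h}$ and $\widetilde{e} = \mathrm{id}$ already hold on the dense subspace $X \subseteq \widehat{X}$, so they hold throughout by uniqueness of uniformly continuous extensions into the Hausdorff space $\widehat{X}$.

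For joint continuity at $(g_0, \hat{x}_0)$, fix a closed symmetric entourage $U$ of $\widehat{X}$ (closed entourages forming a base). Factor $\widetilde{g} = \widetilde{g_0} \circ \widetilde{g_0^{-1}g}$; the uniform continuity of the fixed map $\widetilde{g_0}$ reduces the goal to showing that $(\widetilde{h}(\hat{x}), \hat{x}_0)$ lies in a prescribed small closed entourage $U''$ whenever $h := g_0^{-1} g$ is near $e$ in $\bG$ and $\hat{x}$ is near $\hat{x}_0$ in $\widehat{X}$. Triangulating $(\widetilde{h}(\hat{x}), \hat{x}_0)$ through $\hat{x}$ and forcing $(\hat{x}, \hat{x}_0)$ to be small by choice of the $\hat{x}$-neighborhood, the problem collapses to controlling the pair $(\widetilde{h}(\hat{x}), \hat{x})$ for such $h$ and $\hat{x}$.

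For this last bound, pick $x_0 \in X$ close to $\hat{x}_0$ by density. Joint continuity of $\triangleright$ at $(e, x_0) \in \bG \times X$ yields a neighborhood $N \ni e$ and an entourage $W$ of $X$ such that $(h\cdot x, x_0)$ lies in a small closed symmetric entourage $V_1$ for all $h \in N$ and $x \in W[x_0]$; combined with $(x_0, x) \in W$, this gives $(h\cdot x, x) \in V_1 \circ V_1$, small. To lift the bound from $x \in X$ to $\hat{x} \in \widehat{X}$ near $\hat{x}_0$, I would take a net $x_\alpha \in X$ converging to $\hat{x}$ in $\widehat{X}$ with $x_\alpha$ eventually in $W[x_0]$ (possible since $\hat{x}$ is close to $\hat{x}_0$, hence to $x_0$); the smallness of $(h\cdot x_\alpha, x_\alpha) = (\widetilde{h}(x_\alpha), x_\alpha)$ then persists to the limit $(\widetilde{h}(\hat{x}), \hat{x})$ because $\widetilde{h}$ is continuous and the target entourage is closed in $\widehat{X}\times \widehat{X}$.

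The principal obstacle is this last step: converting the pointwise joint-continuity bound at $(e, x_0)$ into control uniform in $\hat{x}$ over a neighborhood of $\hat{x}_0$. This is an equicontinuity-near-identity phenomenon that the factorization through $\widetilde{g_0}$ circumvents, combined with the closed-entourage plus density trick to pass from $X$ to $\widehat{X}$. Notably, local compactness of $\bG$ does not appear to be used directly in this argument.
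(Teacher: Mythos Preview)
Your argument is correct (modulo trivial bookkeeping: $V_1\circ V_1$ need not itself be closed, so choose $V_1$ so that $V_1\circ V_1$ sits inside a prescribed closed entourage, and similarly arrange $\widehat{W'}^3\subseteq \widehat W$ so that the three triangulation steps $x_\alpha\to\hat x\to\hat x_0\to x_0$ land you back in $W$). The density-plus-closed-entourage passage to $\widehat X$ is exactly right.

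The paper takes a different route. It curries the action to a map $X\to \cat{Cont}(\bG,X)$ with the compact-convergence uniformity, observes that $\cat{Cont}(\bG,\widehat X)$ is complete, extends to $\widehat X\to\cat{Cont}(\bG,\widehat X)$, and then uncurries back to $\bG\times\widehat X\to\widehat X$. Local compactness of $\bG$ enters precisely in that last uncurrying step (the exponential law for the compact-open topology). Your direct entourage argument sidesteps this entirely, which confirms your closing remark: local compactness is not actually needed for the conclusion. So your approach is slightly more general and more elementary, at the cost of being more hands-on; the paper's is conceptually cleaner but leans on the exponential adjunction and hence on the hypothesis on $\bG$.
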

\begin{proof}
  The functoriality \cite[\S II.3.7, Theorem 3]{bourb_top_en_1} of the construction $X\mapsto \widehat{X}$ does indeed provide a canonical extension as in the statement: every (uniformly continuous)
  \begin{equation*}    
    X\xrightarrow{g\triangleright}X
    \text{extends uniquely to }
    \widehat{X}\xrightarrow{g\triangleright}\widehat{X}
    \text{ (also uniformly continuous)},
  \end{equation*}
  and the associativity and unitality of the resulting action follow from said functoriality. The issue is the joint continuity of the extension $\bG\times \widehat{X}\to \widehat{X}$. First recast \cite[\S VII.8, equation (2)]{mcl_2e} the initial action \Cref{eq:gonx} as a map
  \begin{equation}\label{eq:x2g2x}
    X
    \xrightarrow{\quad}
    \cat{Cont}(\bG,X)
  \end{equation}
  uniformly continuous for the uniformity of compact convergence \cite[\S X.1.3, example III]{bourb_top_en_2} on the codomain. Said compact-convergence uniformity is complete \cite[\S X.1.6, Corollary 3 to Theorem 2]{bourb_top_en_2}, so \Cref{eq:x2g2x} fits into
  \begin{equation*}
    \begin{tikzpicture}[auto,baseline=(current  bounding  box.center)]
      \path[anchor=base] 
      (0,0) node (l) {$X$}
      +(2,.5) node (u) {$\widehat{X}$}
      +(2,-.5) node (d) {$\cat{Cont}(\bG,X)$}
      +(6,0) node (r) {$\cat{Cont}(\bG,\widehat{X})$.}
      ;
      \draw[->] (l) to[bend left=6] node[pos=.5,auto] {$\scriptstyle $} (u);
      \draw[->] (u) to[bend left=6] node[pos=.5,auto] {$\scriptstyle $} (r);
      \draw[->] (l) to[bend right=6] node[pos=.5,auto,swap] {$\scriptstyle $} (d);
      \draw[->] (d) to[bend right=6] node[pos=.5,auto,swap] {$\scriptstyle $} (r);
    \end{tikzpicture}
  \end{equation*}
  The upper right-hand map is the counterpart (via \cite[\S VII.8, (2)]{mcl_2e} again) of the extended action $\bG\times \widehat{X}\to \widehat{X}$. 
\end{proof}

\cite[Exercise E4.8]{hm4} suggests the natural problem of extending it to actions on broader classes of locally convex TVSs. It will first be convenient to introduce some handy terminology.

\begin{definition}\label{def:gfin}
  \begin{enumerate}[(1)]
  \item\label{item:def:gfin:locfin} A representation $\varphi$ of a locally compact group on a TVS $E$ is {\it locally $\bG$-finite} if the space $E_{fin}$ of \Cref{eq:efin} coincides with $E$.

  \item\label{item:def:gfin:globfin} For compact $\bG$, $\varphi$ is {\it (globally) $\bG$-finite} if it is locally $\bG$-finite and has finitely many isotypic components.

  \item\label{item:def:gfin:gglob} $E$ is {\it $\bG$-globalizing} (for compact $\bG$) if every jointly continuous $\bG$-representation on $E$ is $\bG$-finite as soon as it is locally so.

  \item\label{item:def:gfin:glob} $E$ is {\it globalizing} if it is $\bG$-globalizing for arbitrary compact $\bG$. 
  \end{enumerate}
\end{definition}

In this language, \cite[Exercise E4.8]{hm4} says that weakly complete TVSs are globalizing. It is not difficult to show (\Cref{pr:hme48-frechet}) that {\it Fr\'echet} or {\it (F)}-spaces (i.e. \cite[\S 18.2]{k_tvs-1} complete metrizable locally convex) also are. Among products of real (or complex) lines, only those with $\le \aleph_0$ factors are metrizable \cite[\S 18.3(6)]{k_tvs-1}; weakly complete spaces, then, mostly are not. On the other hand, arbitrary complete (even barreled) LCTVSs are certainly not all globalizing, by \Cref{ex:hme48-dirsum}. {\it Some} constraint on the carrier space is thus necessary for globalization to go through. 

\begin{example}\label{ex:hme48-dirsum}
  Fix an arbitrary (possibly infinite) family of irreducible (hence finite-dimensional) representations $\bG\xrightarrow{\varphi}GL(V_{\varphi})$ of a compact group, and equip the direct sum $V:=\bigoplus_{\varphi}V_{\varphi}$ with its {\it direct-sum locally convex topology} \cite[\S 18.5]{k_tvs-1} and the obvious direct-sum $\bG$-action.

  $V$ is complete and locally convex \cite[\S 18.5(3)]{k_tvs-1}, the action is plainly jointly continuous (since every vector has non-zero components in only finitely many $V_{\varphi}$) and by construction, every vector is $\bG$-finite. If the family $\{\varphi\}$ was infinite to begin with, though, then the resulting representation on $V$ of course has infinitely many isotypic components.

  Examples of this general type are by \cite[\S 28.4(1)]{k_tvs-1} even bornological. Being also complete, they are barreled \cite[\S 28.1(2)]{k_tvs-1}. 
\end{example}

\begin{proposition}\label{pr:hme48-frechet}
  Fr\'echet spaces are globalizing in the sense of \Cref{def:gfin}\Cref{item:def:gfin:glob}. 
\end{proposition}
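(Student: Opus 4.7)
The plan is to exploit the completeness-plus-metrizability of a Fréchet space head-on: were infinitely many isotypic components $E_\alpha$ non-zero, we could sum a suitably shrinking non-zero vector from each to build an element with non-zero projections onto all of them, contradicting local $\bG$-finiteness. Throughout, let $\bG$ be compact and fix a jointly continuous representation $\varphi$ of $\bG$ on a Fréchet space $E$ with $E = E_{fin}$. Joint continuity together with compactness of $\bG$ makes $\varphi(\bG)$ equicontinuous and the orbit maps $g\mapsto gv$ continuous from $\bG$ to $E$, so the integral
\[
P_\alpha v := \dim(\alpha)\int_\bG \overline{\chi_\alpha(g)}\,gv\;\mathrm{d}\mu_{Haar}(g),\qquad \alpha\in\Irr(\bG),
\]
(the usual isotypic projection, en route to \cite[Theorem 4.22]{hm4}) is a well-defined $E$-valued integral and defines a continuous linear endomorphism of $E$ satisfying $P_\alpha P_\beta = \delta_{\alpha\beta}P_\alpha$. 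Because $E = E_{fin}$, every $v\in E$ generates a finite-dimensional invariant subspace, itself a finite sum of isotypic pieces, so $v = \sum_\alpha P_\alpha v$ has finite support in $\alpha$. This identifies $E$ with the \emph{algebraic} direct sum $\bigoplus_{\alpha\in\Irr(\bG)} E_\alpha$, with $P_\alpha(E) = E_\alpha$.

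Suppose, for contradiction, that $\{\alpha\in\Irr(\bG) : E_\alpha\neq 0\}$ contains an infinite sequence $\alpha_1,\alpha_2,\dots$ of pairwise distinct elements, and pick $0\neq w_n\in E_{\alpha_n}$. Fix a complete translation-invariant metric $d$ inducing the topology on $E$. Continuity of scalar multiplication at the origin allows us to choose non-zero scalars $\lambda_n$ with $v_n := \lambda_n w_n\in E_{\alpha_n}\setminus\{0\}$ satisfying $d(0,v_n) < 2^{-n}$. Translation invariance of $d$ renders the partial sums $S_N := \sum_{n=1}^N v_n$ Cauchy, so by completeness $v := \sum_n v_n$ exists in $E$. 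Applying each continuous projection $P_{\alpha_n}$ termwise to this convergent series and invoking orthogonality yields $P_{\alpha_n}(v) = v_n\neq 0$ for every $n$. Thus $v$ has non-zero components in infinitely many isotypic summands, placing it outside the algebraic direct sum $\bigoplus_\alpha E_\alpha = E_{fin}$ and contradicting $E = E_{fin}$.

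The only genuine technicality is the clean setup of the continuous projections $P_\alpha$; this uses joint continuity of $\varphi$ in an essential way (equicontinuity via compactness of $\bG$ is what makes each $P_\alpha$ continuous, cf.\ \Cref{res:hm-joint-cont}), but this is precisely the hypothesis already built into the notion of ``$\bG$-globalizing'' in \Cref{def:gfin}\Cref{item:def:gfin:gglob}. With these projections in hand, the argument is a one-line application of metric completeness, parallel in spirit but independent of the weakly-complete analogue recalled from \cite[Exercise E4.8]{hm4}.
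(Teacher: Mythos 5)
Your argument is correct and follows essentially the same route as the paper's own proof: pick non-zero vectors in infinitely many isotypic components shrinking fast with respect to a complete (translation-invariant) metric, sum them using completeness, and apply the continuous isotypic projections of \cite[Theorem 4.22]{hm4} to contradict local $\bG$-finiteness. The additional detail you provide (the explicit character-averaged projections and the role of translation invariance) merely spells out steps the paper leaves implicit.
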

\begin{proof}
  Naturally, only the local-to-global implication is interesting, so let $\bG\xrightarrow{\varphi}GL_s(E)$ (notation as in \Cref{eq:g2gls}) be a representation on an $(F)$-space, automatically jointly continuous \cite[\S VIII.2.1, Proposition 1]{bourb_int_en_7-9} because $(F)$-spaces are barreled \cite[\S 21.5(3)]{k_tvs-1}.
  
  The topology of $E$ is by assumption induced by a complete metric $d$. Were there infinitely many isotypic components $E_n\le E$, $n\in \bZ_{\ge 0}$, we could pick
  \begin{equation*}
    v_n\in E_n,\quad 0<\|v_n\|:=d(0,v_n)<\frac {1}{2^n}.
  \end{equation*}
  The sequence of partial sums $v_0+\cdots + v_n$ is Cauchy and hence convergent to $v:=\sum_n v_n$, and the (continuous \cite[Theorem 4.22]{hm4}) projections $E\xrightarrowdbl{} E_n$ all fail to annihilate the limit $v$. This contradicts the local $\bG$-finiteness, and we are done.
\end{proof}

Another simple observation:

\begin{proposition}\label{pr:equivlim}
  Let $\bG$ be a compact group and $\bG\xrightarrow{\varphi}GL_s(E)$ a jointly continuous representation on a complete LCTVS expressible $\bG$-equivariantly as a cofiltered limit
  \begin{equation}\label{eq:gequivelimei}
    E\cong \varprojlim_{i}E_i
    ,\quad
    E_i:=E/H_i\text{ complete}
    ,\quad
    H_i\le E\text{ closed $\bG$-invariant}.
  \end{equation}
  If the induced $\bG$-representations on $E_i$ are all $\bG$-finite, then
  \begin{equation*}
    E\cong \prod_{\alpha\in\Irr\bG}E_{\alpha}
    \quad\left(\text{product of its isotypic components}\right),
  \end{equation*}
  and
  \begin{equation*}
    E_{\alpha}\ne 0
    \xLeftrightarrow{\quad}
    E_{i,\alpha}\ne 0
    \text{ for some }i.
  \end{equation*}
\end{proposition}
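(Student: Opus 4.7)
The plan is to exploit the $\bG$-equivariance of the transition maps to decompose the given inverse system isotypic type by isotypic type, and then commute the resulting product with the cofiltered limit.

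By hypothesis each $E_i$ is globally $\bG$-finite, so it splits as a finite topological direct sum $E_i=\bigoplus_{\alpha}E_{i,\alpha}$ of isotypic components (indexed by a finite subset of $\Irr\bG$; set $E_{i,\alpha}:=0$ for the remaining $\alpha$). Since finite direct sums of TVSs agree with products, $E_i\cong\prod_{\alpha\in\Irr\bG}E_{i,\alpha}$. The transition maps $\pi_{ji}:E_i\to E_j$ are $\bG$-equivariant, and because $\Hom_{\bG}(V_\alpha,V_\beta)=0$ for non-isomorphic irreducibles they are block-diagonal: each $\pi_{ji}$ restricts to $E_{i,\alpha}\to E_{j,\alpha}$ for every $\alpha$. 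The given cofiltered system therefore splits $\bG$-equivariantly as an $\Irr\bG$-indexed product of the subsystems $(E_{i,\alpha})_i$.

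Products and cofiltered limits are both limits, and limits commute with limits in the limit-closed category of complete LCTVSs carrying jointly continuous $\bG$-actions, so
\begin{equation*}
E\;\cong\;\varprojlim_i E_i\;\cong\;\varprojlim_i\prod_{\alpha}E_{i,\alpha}\;\cong\;\prod_{\alpha}F_\alpha,\qquad F_\alpha:=\varprojlim_i E_{i,\alpha}.
\end{equation*}
To identify $F_\alpha$ with the $\alpha$-isotypic component $E_\alpha$, apply the $\alpha$-averaging idempotent $p_\alpha(v):=\dim(\alpha)\int_{\bG}\overline{\chi_\alpha(g)}\,gv\,\mathrm d\mu(g)$; it is a well-defined continuous operator on $E$ by joint continuity of the action and completeness of $E$ (cf.\ \cite[Theorem 3.36]{hm4}), and is natural in the representation, so it commutes with each quotient $E\twoheadrightarrow E_i$ and induces there the analogous $p_{i,\alpha}$. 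On the product decomposition above $p_\alpha$ is therefore projection onto the $\alpha$-th factor, yielding $F_\alpha=p_\alpha(E)=E_\alpha$.

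For the stated equivalence: if $E_\alpha\ne0$ then any nonzero $v\in F_\alpha\subseteq E$ must have nonzero image in some $E_i$, necessarily inside $E_{i,\alpha}$, because the projections $E\to E_i$ are jointly injective (as $E\cong\varprojlim_iE_i$). Conversely, the hypothesis $E_i=E/H_i$ makes each $E\twoheadrightarrow E_i$ surjective; applying $p_\alpha$ to any lift of a nonzero $w\in E_{i,\alpha}$ yields an element of $E_\alpha$ still mapping to $w$, so $E_\alpha\ne0$. The main technical point requiring care is the second paragraph — confirming that cofiltered limits and infinite products really commute in the ambient category and that the resulting isomorphism respects the $\bG$-action — but both are formal once one fixes an appropriate limit-closed category of complete LCTVSs with jointly continuous $\bG$-actions.
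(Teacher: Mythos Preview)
Your argument is correct and follows essentially the same route as the paper: decompose each $E_i$ as a product over $\Irr\bG$, commute the cofiltered limit with that product, and identify the resulting factors with the isotypic components of $E$. The only cosmetic difference is in that last identification: you invoke the naturality of the averaging idempotents $p_\alpha$, whereas the paper phrases it as continuity of the functor $E\mapsto E_\alpha\cong (V_\alpha\otimes E)^{\bG}$; these are equivalent formulations of the same fact. You also spell out the $E_\alpha\ne 0\Leftrightarrow E_{i,\alpha}\ne 0$ equivalence more explicitly than the paper does, which is fine.
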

\begin{proof}    
  The finiteness assumption ensures that every quotient $E_i=E/H_i$ is a product
  \begin{equation*}
    E_i\cong \prod_{\alpha\in\Irr\bG}E_{i,\alpha}, 
  \end{equation*}
  and the conclusion follows immediately from the fact that in any category limits commute \cite[\S IX.2]{mcl_2e}:
  \begin{equation}\label{eq:isotyp-cont}
    E\cong \varprojlim_i\prod_{\alpha}E_{i,\alpha}
    \cong\prod_{\alpha}\varprojlim_i E_{i,\alpha}
    \cong \prod_{\alpha} E_{\alpha}.
  \end{equation}
\end{proof}

\begin{remarks}\label{res:notquot}
  \begin{enumerate}[(1),wide=0pt]
  \item The quotients $E_i=E/H_i$ in the statement are not necessarily equipped with the quotient topology: that is the {\it finest} possibility, but the result allows coarser options.

  \item The last isomorphism of \Cref{eq:isotyp-cont} relies implicitly on the fact that for compact $\bG$, the functor
    \begin{equation*}
      E\xmapsto{\quad}E_{\alpha},\quad \alpha\in \Irr(\bG)
    \end{equation*}

    picking out the $\alpha$-isotypic component is {\it continuous} \cite[\S V.4]{mcl_2e} (i.e. preserves arbitrary limits) on any of the various categories of TVSs equipped with $\bG$ actions. This is a composite of several simple observations:
    \begin{itemize}
    \item $E_{\alpha}$ can be recovered as the space
      \begin{equation*}
        E_\alpha = (V_{\alpha}\otimes E)^{\bG}
        ,\quad
        V_{\alpha}:=\text{carrier space of }\alpha
      \end{equation*}
      of invariants in the {\it tensor product} \cite[\S 2]{rob} of the $\bG$-representations $\alpha$ and $E$;

    \item tensoring with the finite-dimensional $V_{\alpha}$ is continuous (because \cite[Proposition 13.4]{ahs} it is easily seen to preserve products and {\it equalizers} \cite[Definition 7.51]{ahs});

    \item and taking $\bG$-invariants means forming a limit (equalizer of the maps induced by the individual $g\in \bG$) so commutes with limits \cite[\S IX.2]{mcl_2e}. 
    \end{itemize}
  \end{enumerate}  
\end{remarks}

In particular:

\begin{corollary}\label{cor:equivlim}
  Suppose $\bG\xrightarrow{\varphi}GL_s(E)$ as in \Cref{pr:equivlim} is locally $\bG$-finite and, assuming \Cref{eq:gequivelimei}, the $E_i$ are globalizing. $\varphi$ is then $\bG$-finite. 
\end{corollary}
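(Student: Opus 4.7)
The plan is to verify the hypothesis of \Cref{pr:equivlim}---namely that each induced representation on $E_i$ is globally $\bG$-finite---and then to exploit local $\bG$-finiteness of $E$ itself to cut down the (a priori infinite) product decomposition to a finite one.

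First I would check that each induced $\bG$-representation $\bG \to GL_s(E_i)$ is jointly continuous and locally $\bG$-finite. Joint continuity follows because the composite $\bG \times E \to E \to E_i$ is jointly continuous and factors (set-theoretically, hence topologically on the quotient side) through $\bG \times E_i \to E_i$; even when the topology on $E_i$ is coarser than the quotient topology (cf.\ \Cref{res:notquot}), continuity is preserved by passing to the coarser topology on the target. Local $\bG$-finiteness is immediate from surjectivity: for $w = \pi_i(v) \in E_i$ with $v \in E = E_{fin}$, the orbit span $\spn \bG w = \pi_i(\spn \bG v)$ is the image of a finite-dimensional subspace.

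Next, since each $E_i$ is globalizing and carries a jointly continuous, locally $\bG$-finite representation, each $E_i$-representation is $\bG$-finite, i.e.\ has finitely many nonzero isotypic components. \Cref{pr:equivlim} then applies and yields
\begin{equation*}
  E \;\cong\; \prod_{\alpha \in \Irr \bG} E_{\alpha},
  \qquad
  E_{\alpha} \ne 0 \iff E_{i,\alpha} \ne 0 \text{ for some } i.
\end{equation*}

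The remaining (and only nontrivial) step is to rule out the possibility that infinitely many $E_{\alpha}$ are nonzero. Suppose that the set $S := \{\alpha \in \Irr \bG \mid E_{\alpha} \ne 0\}$ is infinite, and choose $v_{\alpha} \in E_{\alpha} \setminus \{0\}$ for every $\alpha \in S$. Because $E$ is a \emph{product} (not a direct sum) of the $E_{\alpha}$, the tuple $v := (v_{\alpha})_{\alpha}$ (with zero components outside $S$) is a bona fide element of $E$. Its $\bG$-orbit span $\spn \bG v$ projects onto a nonzero subrepresentation of $E_{\alpha}$ for every $\alpha \in S$ (it contains $\spn \bG v_{\alpha}$ in each factor), so $\spn \bG v$ has nontrivial $\alpha$-isotypic component for every $\alpha \in S$. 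Since distinct isotypic components are linearly independent, $\dim \spn \bG v \ge |S| = \infty$, contradicting the local $\bG$-finiteness of $\varphi$. Hence $S$ is finite and $\varphi$ is $\bG$-finite. The only subtle point is the availability of the product (as opposed to direct-sum) decomposition; this is exactly what \Cref{pr:equivlim} delivers, and it is what makes the contradiction above work.
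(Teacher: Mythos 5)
Your route is the paper's: verify the hypotheses of \Cref{pr:equivlim} for the induced representations on the $E_i$ (local $\bG$-finiteness via surjectivity of $E\to E_i=E/H_i$, joint continuity, then the globalizing assumption to get $\bG$-finiteness of each $E_i$), and then rule out infinitely many non-zero isotypic components using the product decomposition. The paper compresses the last step into the remark that the product of the isotypic components is strictly larger than their sum, which is exactly $E_{fin}=E$ by local $\bG$-finiteness; your explicit vector $(v_\alpha)_\alpha$ with infinitely many non-zero entries is the same argument spelled out, and it is correct (modulo the routine observation that a finite-dimensional subrepresentation can map non-trivially into only finitely many isotypic components $E_\alpha$).

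The one step whose justification does not hold up as written is joint continuity of the induced action $\bG\times E_i\to E_i$ in the case, explicitly allowed by \Cref{res:notquot}, that $E_i$ carries a topology strictly coarser than the quotient topology. Coarsening the target does preserve continuity, but here the $E_i$ factor of the \emph{domain} $\bG\times E_i$ is coarsened as well, so the continuity you need is a stronger statement than the one inherited from the quotient-topology case, not a weaker one. And it can genuinely fail: the paper's own \Cref{ex:hm-joint-cont:pro-cast-not-jointly-cont} exhibits the translation action on $\cC_{\bC}(\bS^1)$, jointly continuous for the sup norm, becoming only separately continuous after passing (via a continuous, equivariant, surjective linear map, namely the identity) to the coarser topology of uniform convergence on distinguished compacta. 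When $E_i$ does carry the quotient topology your argument is fine, provided you invoke that $\id_{\bG}\times\pi_i$ is a quotient map because $\bG$ is compact (Whitehead's lemma on products of quotient maps with locally compact factors); in the coarser case, joint continuity of the induced representations must be assumed or verified by other means before the globalizing property of $E_i$ (which per \Cref{def:gfin} only concerns jointly continuous representations) can be invoked. To be fair, the paper's one-line proof leaves this point entirely implicit, so your overall approach matches; but the specific reason you give for it is not valid.
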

\begin{proof}
  it is in any case the product of its isotypic components by \Cref{pr:equivlim}, so strictly larger than the {\it sum} of those isotypic components if infinitely many of them are non-zero. 
\end{proof}

\begin{remark}\label{re:strict-lim-pathological}
  A cofiltered limit \Cref{eq:elimei} is what, by analogy to the dual colimit situation \cite[\S 19.4]{k_tvs-1}, might be called {\it strict}: for indices $i\le j$ the map $E_j\to E_i$ is onto, $E_i$ is equipped with the resulting quotient topology. The terminology is also familiar in somewhat different contexts, e.g. the {\it strict pro-objects} of (or rather dual to) \cite[\S 8.12.1]{sga4_01-04}.

  Note, though, that the requirement that the structure maps
  \begin{equation*}
    E\cong \varprojlim_i E_i
    \xrightarrow{\quad}
    E_j
  \end{equation*}
  of the limit \Cref{eq:elimei} be onto is an additional requirement: this need not be so for arbitrary strict cofiltered limits, as \Cref{ex:strictlimsets} shows. 
\end{remark}

\begin{example}\label{ex:strictlimsets}
  There are trivial cofiltered limits of non-trivial (complete) LCTVSs, strict in the sense of \Cref{re:strict-lim-pathological}. One can simply adapt the analogous set-theoretic version \cite{wat_invlim} of this remark: given a cofiltered system
  \begin{equation*}
    X_j\xrightarrowdbl{\quad}X_i
    ,\quad
    i\le j
  \end{equation*}
  of set surjections with empty limit in $\cat{Set}$, consider the resulting cofiltered system 
  \begin{equation*}
    \bR^{\oplus X_j}\xrightarrowdbl{\quad}\bR^{\oplus X_i}
    ,\quad
    i\le j
  \end{equation*}
  of quotient maps, with the direct sums equipped with their locally convex direct-sum topology \cite[\S 18.5]{k_tvs-1}. The limiting LCTVS will then be trivial.
\end{example}

In light of the remarks preceding \Cref{ex:hme48-dirsum}, it seems apposite to search for a common generalization of \cite[Exercise E4.8]{hm4} and \Cref{pr:hme48-frechet} by extending those results to a natural class of LCTVSs broad enough to house both Fr\'echet and weakly complete spaces. The following result is one way to achieve that goal.  

\begin{theorem}\label{th:lim-stab-glob}
  Suppose a complete LCTVS $E$ is expressible as a cofiltered limit
  \begin{equation}\label{eq:elimei}
    E\cong \varprojlim_{i}E_i
    ,\quad
    E_i:=E/H_i\text{ complete}
  \end{equation}
  for a family of closed subspaces $H_i\le E$, filtered when ordered by reverse inclusion.

  If for each $i$ the countable power $E_i^{\aleph_0}$ is globalizing, so is $E$.
\end{theorem}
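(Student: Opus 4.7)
The strategy is to apply \Cref{cor:equivlim}: realize $E$ as a $\bG$-equivariant cofiltered limit of complete globalizing LCTVSs, and conclude $\bG$-finiteness of any given locally $\bG$-finite jointly continuous representation $\varphi\colon\bG\to GL_s(E)$ (with $\bG$ compact) from there.

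For each index $i$, set $K_i := \bigcap_{g \in \bG} g\cdot H_i$. Each $K_i$ is closed (as an intersection of closed $\bG$-translates of $H_i$, using that $\bG$ acts by homeomorphisms via $\varphi$), $\bG$-invariant by construction, and contained in $H_i$; the family $\{K_i\}$ is filtered by reverse inclusion, with $\bigcap_i K_i = \{0\}$ (as $\bigcap_i H_i = \{0\}$). Since continuous seminorms on $E\cong\varprojlim_i E/H_i$ are (equivalent to) those of the form $p\circ\pi_{H_i}$ for $(i,p)$, and since each such factors through $E/K_i$, a routine verification identifies $E \cong \varprojlim_i E/K_i$ $\bG$-equivariantly as topological vector spaces. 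By \Cref{res:notquot} we are free to equip $E/K_i$ with any topology coarser than the quotient topology making it complete—e.g., the one inherited from a $\bG$-equivariant embedding into a complete ambient to be constructed next.

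The substantive step is to show each $E/K_i$ is globalizing, so as to invoke \Cref{cor:equivlim}. The $\bG$-equivariant orbit map $v\mapsto\bigl(g\mapsto \pi_i(g v)\bigr)$ realizes $E/K_i$ as a subspace of $\cC(\bG,E_i)$ under right translation; injectivity here is precisely the definition of $K_i$. Local $\bG$-finiteness of the quotient representation ensures that every orbit spans a finite-dimensional subspace and so factors through a finite-dimensional Lie quotient of $\bG$; the joint inverse limit of all such Lie quotients yields a compact Polish (hence metrizable, separable) quotient $\bG_i$ through which the representation on $E/K_i$ factors. Picking a countable dense $\{g_j\}_{j\in\bN}\subset\bG_i$ with lifts to $\bG$ and composing with the restriction $\cC(\bG_i,E_i)\to E_i^{\{g_j\}}=E_i^{\aleph_0}$ yields a continuous linear injection $E/K_i \hookrightarrow E_i^{\aleph_0}$ (injectivity via density of $\{g_j\}\subset\bG_i$ together with continuity of the orbit maps $\bG_i\to E_i$). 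Transporting the $\bG$-action across this embedding and extending it to all of $E_i^{\aleph_0}$ via a topological complement with trivial action produces a jointly continuous, locally $\bG$-finite representation on $E_i^{\aleph_0}$ whose restriction to the image of $E/K_i$ recovers the original action. The globalizing hypothesis then forces the ambient representation, and hence the subrepresentation on $E/K_i$, to be $\bG$-finite. Applying \Cref{cor:equivlim} to the $\bG$-equivariant cofiltered limit $E\cong\varprojlim_iE/K_i$ of complete globalizing quotients delivers the $\bG$-finiteness of $\varphi$.

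The main obstacle is the extension of the $\bG$-action from the image $E/K_i \hookrightarrow E_i^{\aleph_0}$ to the ambient space in a manner that is simultaneously jointly continuous and locally $\bG$-finite. Topological complements of closed subspaces need not exist in general complete LCTVSs; in favorable settings (say Fréchet $E_i$ with finite-dimensional orbit spans, which already covers the \Cref{pr:hme48-frechet} and Hofmann--Morris cases) the construction is transparent, but the general case likely requires more care, either by replacing $E_i^{\aleph_0}$ with a suitable $\bG$-invariant closed subspace that does admit the required direct-sum decomposition, or by directly engineering a locally $\bG$-finite representation on $E_i^{\aleph_0}$ whose isotypic structure mirrors that of $E/K_i$.
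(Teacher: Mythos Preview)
Your skeleton matches the paper's: replace each $H_i$ by the $\bG$-invariant $K_i:=\bigcap_{g\in\bG}gH_i$, express $E$ as the $\bG$-equivariant cofiltered limit $\varprojlim_i E/K_i$, embed each $E/K_i$ into a countable power of $E_i$, and finish with \Cref{cor:equivlim}. But the two technical steps you leave open are precisely where the work lies, and the paper handles both differently from what you sketch.

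\textbf{Countability.} Your route to a countable index set is to argue that the (locally finite) quotient representation on $E/K_i$ factors through an inverse limit of Lie quotients of $\bG$, and to assert that this inverse limit is Polish. That assertion is unjustified: an inverse limit of uncountably many compact Lie groups (think $(\bZ/2)^{\kappa}$ for $\kappa>\aleph_0$) is typically not second countable. The paper avoids this entirely by first reducing to \emph{separable} $\bG$: if $\varphi$ had infinitely many isotypic components one could pick countably many inequivalent irreducibles and, since characters determine irreducibles, find a countably-generated closed subgroup $\bH\le\bG$ whose restriction still sees infinitely many; so it suffices to treat $\bH$ in place of $\bG$. With $\bG$ separable one has a countable dense $D\ni 1$, and the seminorm description of $H_i$ gives $\bigcap_{g\in\bG}gH_i=\bigcap_{g\in D}gH_i$, so the relevant product has countably many factors.

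\textbf{Complementation.} You correctly identify this as the crux and leave it unresolved. The paper's resolution is to use the embedding
\[
  E/K_i\ \lhook\joinrel\longrightarrow\ \prod_{g\in D}E/gH_i
\]
(equipping the left-hand side with the \emph{subspace} topology, as permitted by \Cref{res:notquot}) and to recognize the image as the \emph{graph} of the continuous linear map
\[
  E/H_i\ \xrightarrow{\ (g)_{1\ne g\in D}\ }\ \prod_{1\ne g\in D}E/gH_i
\]
assembled from the isomorphisms $E/H_i\xrightarrow{\,g\,}E/gH_i$ induced by the action. Graphs of continuous linear maps $X\to Y$ are always complemented in $X\times Y$ (the complement is $\{0\}\times Y$, the projection is $(x,y)\mapsto(x,f(x))$), so $E/K_i$ sits as a complemented, hence globalizing (\Cref{le:compl-glob}), subspace of $\prod_{g\in D}E/gH_i\cong E_i^{|D|}$. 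This is exactly the missing idea: rather than seeking a complement abstractly, one chooses the embedding so that complementation is automatic. Your orbit-map embedding $v\mapsto(g\mapsto\pi_i(gv))$ into $\cC(\bG,E_i)$ followed by restriction to $\{g_j\}$ lands in the same target, but you do not exhibit any structure on the image that would force it to be complemented.
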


Before moving on to the proof, we record one consequence that motivated the investigation to begin with: \Cref{cor:lim-stab-glob} generalizes both \Cref{pr:hme48-frechet} (obviously) and \cite[Exercise E4.8]{hm4}, by specializing to products of copies of $\bR$ or $\bC$, which are of course Fr\'echet. 

\begin{corollary}\label{cor:lim-stab-glob}
  Arbitrary products of Fr\'echet spaces are globalizing. 
\end{corollary}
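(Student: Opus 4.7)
The plan is to deduce the corollary by a direct application of \Cref{th:lim-stab-glob}. Let $E:=\prod_{j\in J}F_j$ with each $F_j$ Fr\'echet, and index a cofiltered family by the filtered poset $I$ of at most countable subsets $S\subseteq J$ ordered by inclusion (filteredness is automatic: the union of two countable sets is countable). For each $S\in I$ set
\begin{equation*}
  H_S:=\left\{(x_j)_j\in E\ |\ x_j=0\text{ for all }j\in S\right\},
\end{equation*}
which is closed in $E$ (the intersection of the kernels of the continuous coordinate projections indexed by $j\in S$), and put $E_S:=\prod_{j\in S}F_j$ with its natural product topology. As an at most countable product of Fr\'echet spaces, $E_S$ is itself Fr\'echet, in particular complete.

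Next I would verify the hypothesis that $E\cong\varprojlim_{S\in I}E_S$ in a way compatible with \Cref{eq:elimei}. The canonical projection $\pi_S:E\twoheadrightarrow E_S$ has kernel $H_S$ and is split continuous (lift by inserting zeros in coordinates outside $S$), so $E/H_S\to E_S$ is a topological isomorphism when the codomain carries the product topology; alternatively, one may equip $E_S$ with the (coarser) product topology in accordance with \Cref{res:notquot}. That $E$ itself is the cofiltered limit of the $E_S$ is just the universal property of the product restricted to its countable subproducts, together with the fact that reverse inclusion of $\{H_S\}_{S\in I}$ matches the inclusion order on $I$.

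Finally, for each $S\in I$ the countable power $E_S^{\aleph_0}=\prod_{j\in S}F_j^{\aleph_0}$ is still an at most countable product of Fr\'echet spaces (a countable product of Fr\'echet spaces is Fr\'echet), hence globalizing by \Cref{pr:hme48-frechet}. The hypotheses of \Cref{th:lim-stab-glob} are therefore met, and the theorem delivers that $E$ is globalizing.

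There is no real obstacle here: the entire design choice lies in indexing by \emph{at most countable} subsets, which is exactly what keeps both each quotient $E_S$ and its countable power $E_S^{\aleph_0}$ inside the Fr\'echet class where globalization has already been established. The only mildly delicate point, and the one I would take care over in the write-up, is the identification of the quotient topology on $E/H_S$ with the product topology on $E_S$ via the continuous section --- though as noted, \Cref{res:notquot} allows the coarser choice and sidesteps even this.
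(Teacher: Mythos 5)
Your argument is correct and is essentially the paper's own proof of \Cref{cor:lim-stab-glob}: realize the product as a cofiltered limit of subproducts, observe each quotient and its countable power stay Fr\'echet, and invoke \Cref{pr:hme48-frechet} together with \Cref{th:lim-stab-glob}. The only (immaterial) difference is that the paper indexes by \emph{finite} subsets of $J$ rather than countable ones, which works just as well since the countable power of a finite subproduct is again a countable product of Fr\'echet spaces.
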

\begin{proof}
  Indeed, countable products of Fr\'echet spaces are again Fr\'echet \cite[\S 18.3(6)]{k_tvs-1} and hence globalizing by \Cref{pr:hme48-frechet}, and arbitrary products $E:=\prod_{i}E_{i}$ are expressible as cofiltered limits
  \begin{equation*}
    E\cong \varprojlim_{|\cF|<\infty} \left(E_{\cF}:=\prod_{i\in \cF}E_{i}\right)
  \end{equation*}
  meeting the requirements of \Cref{th:lim-stab-glob}. 
\end{proof}

It will be worthwhile to pick up a few more auxiliary observations along the way. Recall \cite[\S 4.9]{nb_tvs} that a(n automatically closed \cite[Theorem 4.9.3]{nb_tvs}) subspace $F\le E$ of a TVS is {\it complemented} if $E\cong F\oplus F'$  linearly and topologically. 

\begin{lemma}\label{le:compl-glob}
  The property of being ($\bG$-)globalizing passes over to complemented subspaces. 
\end{lemma}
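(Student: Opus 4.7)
Fix a compact group $\bG$ and suppose $E$ is $\bG$-globalizing with $E\cong F\oplus F'$ (a topological-linear direct sum). Given a jointly continuous, locally $\bG$-finite representation $\rho:\bG\to GL_s(F)$, the plan is to extend $\rho$ to $E$ by letting $\bG$ act trivially on $F'$, apply the hypothesis on $E$, and then transfer the finiteness back to $F$ via the $\bG$-equivariant projection $E\twoheadrightarrow F$. The argument reduces the complemented-subspace version to the ambient globalizing property; the same reduction handles the unqualified ``globalizing'' case by letting $\bG$ vary.

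Concretely, define $\widetilde{\rho}:\bG\to GL_s(E)$ by
\begin{equation*}
  \widetilde{\rho}(g)(v_F+v_{F'}) := \rho(g)(v_F) + v_{F'}
  ,\quad v_F\in F,\ v_{F'}\in F'.
\end{equation*}
Joint continuity of $\widetilde{\rho}$ follows because the projections $E\twoheadrightarrow F,\ E\twoheadrightarrow F'$ and inclusions $F\hookrightarrow E,\ F'\hookrightarrow E$ are continuous by complementedness, so the $E$-action factors through continuous maps. Local $\bG$-finiteness transfers as well: for $v=v_F+v_{F'}\in E$,
\begin{equation*}
  \spn\bG v\subseteq \spn(\bG v_F)+\spn(v_{F'})
\end{equation*}
is finite-dimensional because $\rho$ is locally $\bG$-finite on $F$.

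Since $E$ is $\bG$-globalizing, $\widetilde{\rho}$ has finitely many nonzero isotypic components $E_\alpha$, $\alpha\in\Irr(\bG)$. The subspace $F\le E$ is $\bG$-invariant, and the projection $\pi_F:E\twoheadrightarrow F$ along $F'$ is $\bG$-equivariant (since $\bG$ fixes $F'$ pointwise under $\widetilde{\rho}$), which means in particular that the inclusion $F\hookrightarrow E$ is $\bG$-equivariant. Thus every $\alpha$-isotypic vector of $F$ is also $\alpha$-isotypic in $E$, giving $F_\alpha\subseteq E_\alpha$ for all $\alpha$. Therefore $\{\alpha:F_\alpha\ne 0\}\subseteq\{\alpha:E_\alpha\ne 0\}$ is finite, so $\rho$ is $\bG$-finite.

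The main (minor) thing to verify carefully is joint continuity of the extension and the $\bG$-equivariance of the projection; both are immediate from the fact that the direct-sum decomposition $E\cong F\oplus F'$ is topological and that $\bG$ acts trivially on $F'$. The unqualified ``globalizing'' statement follows by running the above for every compact group $\bG$.
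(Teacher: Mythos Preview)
Your proof is correct and follows exactly the same approach as the paper: extend the action to $E$ by letting $\bG$ act trivially on the complement, observe that joint continuity and local $\bG$-finiteness transfer, apply the globalizing hypothesis on $E$, and read off finiteness of the isotypic components of $F$. The paper compresses this into a single sentence, whereas you have spelled out the verifications (joint continuity via the continuous projections/inclusions, the inclusion $F_\alpha\subseteq E_\alpha$); there is no substantive difference in strategy.
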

\begin{proof}
  A $\bG$-action on a complemented subspace $F\le E$ extends over to $E$ by making $\bG$ act trivially on a complement $F'\le E$ with $E\cong F\oplus F'$, (local) $\bG$-finiteness transports over between the original action and the extension, etc.
\end{proof}

In particular, the hypothesis of \Cref{th:lim-stab-glob} now implies that {\it finite} powers of each $E_i$ are also globalizing, since they are summands in countable powers; we take this for granted. 

\pf{th:lim-stab-glob}
\begin{th:lim-stab-glob}
  One difficulty is that a jointly continuous representation $\bG\xrightarrow{\varphi}GL_s(E)$, fixed for the duration of the proof, need not be compatible with the limit realization \Cref{eq:elimei}. Part of the proof will supply that deficiency.

  \begin{enumerate}[(I),wide=0pt]

  \item {\bf Reduction to separable $\bG$.} That is, we argue here that it is enough to assume $\bG$ {\it separable} in the usual \cite[Problem 5F]{wil-top} sense that it has a dense countable subset.

    Indeed, irreducible representations being uniquely determined by their characters \cite[Corollary 4.5]{hm4}, countably infinitely many inequivalent irreducible $\bG$-representations will always return infinitely many mutually inequivalent irreducible summands if restricted to an appropriate countably-generated (hence separable) closed subgroup $\bH\le \bG$. The other relevant properties such as (local) finiteness are also preserved upon restricting from $\bG$ to $\bH$, so we may work with the latter throughout.

    We henceforth assume $\bG$ separable. 
    
  \item {\bf Reduction to $\bG$-equivariant limits \Cref{eq:elimei}.} The goal here is to show that the conclusion holds generally if it holds under the assumption that the $H_i\le E$ of \Cref{eq:elimei} are $\bG$-invariant. We do this by substituting
    \begin{equation}\label{eq:ghi}
      \tensor[_{\bG}]{H}{_i}
      :=
      \bigcap_{g\in \bG}gH_i
      \quad\text{for}\quad H_i
    \end{equation}
    respectively. Observe first that in \Cref{eq:ghi} we can always intersect over $g$ ranging over any dense subset $D\subseteq \bG$. Indeed, $H_i$ is \cite[\S 18.1(3)]{k_tvs-1} the joint kernel of those continuous {\it seminorms} \cite[\S 14.1]{k_tvs-1} $p_{s}$ on $E$ which vanish on $H_i$, so that
    \begin{equation*}
      \bigcap_{g\in \bG}{H_i}
      =
      \bigcap_{g\in \bG}\ker p_s(g-)
      =
      \bigcap_{g\in D}\ker p_s(g-)
    \end{equation*}
    by the continuity of the action: for fixed $v\in E$ and index $s$ we have
    \begin{equation*}
      p_s(gv)=0,\ \forall g\in \bG
      \quad
      \xLeftrightarrow{\quad}
      \quad
      p_s(gv)=0,\ \forall g\in \text{ dense subset }D\subseteq \bG.
    \end{equation*}
    Our separability assumption then allows us to take $D$ countable; assume also that $1\in D$ for convenience. Consider the embedding
    \begin{equation}\label{eq:eghiemb}
      E/\tensor[_{\bG}]{H}{_i}
      \lhook\joinrel\xrightarrow{\quad}
      \prod_{g\in D}E/gH_i,
    \end{equation}
    equipping its domain with the subspace topology. The image is the graph of the TVS morphism
    \begin{equation*}
      E/H_i
      \xrightarrow{\quad(1\ne g\in D)\quad}
      \prod_{1\ne g\in D}E/gH_i,
    \end{equation*}
    where
    \begin{equation*}
      E/H_i\xrightarrow[\cong]{\quad g\quad}
      E/gH_i
    \end{equation*}
    denotes the isomorphism induced by the action of $g\in G$. Said image is thus complemented (so also complete, because closed) in the product
    \begin{equation*}
      \prod_{g\in D}E/gH_i
      \cong
      E/H_i\times \prod_{1\ne g\in D}E/gH_i,
    \end{equation*}
    assumed globalizing, This makes $E/\tensor[_{\bG}]{H}{_i}$ (with the subspace topology \Cref{eq:eghiemb}) globalizing, and we are done upon observing that the limit realization \Cref{eq:elimei} also implies the analogous
    \begin{equation*}
      E\cong \varprojlim_{i}E/\tensor[_{\bG}]{H}{_i}.
    \end{equation*}
    
  \item {\bf Conclusion, assuming \Cref{eq:elimei} is $\bG$-equivariant.} Simply apply \Cref{cor:equivlim}.  \qedhere
  \end{enumerate}
\end{th:lim-stab-glob}

\begin{remark}\label{re:notquottop}
  The proof of \Cref{th:lim-stab-glob} required some care in distinguishing the quotient topology on the domain of \Cref{eq:eghiemb} from the subspace topology induced by that embedding. This is not a moot point, as the two need not coincide. The problem (of whether or not they do) is moreover linked to the issue of sums of closed subspaces failing to be closed. Consider  
  \begin{equation*}
    \text{closed }F,F'\le \text{ complete metrizable TVS }E.
  \end{equation*}
  Quotienting everything in sight by $F\cap F'$ (which will not affect complete metrizability \cite[pre \S 15.11(4)]{k_tvs-1}), we may assume that $F$ and $F'$ intersect trivially. Paraphrased, \cite[Theorem 1]{zbMATH03035405} (stated there for Banach spaces but going through in the complete metrizable case because the key ingredient is the {\it closed graph theorem} \cite[\S 12.12(3)]{k_tvs-1}) says that 
  \begin{equation*}
    F+F'
    \le 
    E
    \lhook\joinrel\xrightarrow{\quad}
    E/F\times E/F'
  \end{equation*}
  is a homeomorphism onto its image if and only if $F+F'\le E$ is closed. That it isn't always is well known (see for instance \cite[\S 15 and p.110]{halm_sm_2e_1957} for two different approaches to constructing examples of such pathologies).
\end{remark}

Consider, in closing, yet another way in which the isotypic-component machinery of \cite[Chapters 3 and 4]{hm4} can go awry, this time when local convexity is absent.

\begin{example}\label{ex:lp}
  Set $\bG:=\bS^1$ and, for fixed $0<p<1$, consider the space $E:=L^p(\bG,\mu_{Haar})$ (complex-valued functions) of \cite[\S 15.9]{k_tvs-1}. This is the preeminent example of a {\it quasi-Banach} space \cite[\S A.1]{pvl_disc} (equivalently \cite[Theorem 3.2.1]{rol_mls}, complete locally bounded TVS): complete with respect to the topology induced by
  \begin{equation*}
    \vvvert f\vvvert:= \|f\|_p^p:=\int_{\bG}|f|^p\ \mathrm{d}\mu_{Haar}
  \end{equation*}
  This is a {\it $p$-norm} in the sense of \cite[\S 15.10]{k_tvs-1}, i.e. a function satisfying the usual axioms of a norm except for homogeneity, which is instead replaced by {\it $p$-homogeneity}: $\vvvert \lambda f\vvvert =|\lambda|^p\vvvert f\vvvert$ for scalars $\lambda$. 

  $\bG$ acts by rotations $f\xmapsto{g\triangleright}f(g^{-1}\cdot)$ jointly continuously, and
  \begin{equation*}
    E_{fin}=\spn\left\{z\mapsto z^n\ |\ n\in \bZ\right\}\subset E
  \end{equation*}
  is of course dense and algebraically a direct sum of its isotypic components $E_n:=\spn\{z^n\}$, $n\in \bZ$. There are, however, no continuous spectral projections $E\xrightarrowdbl{} E_n$ as in \cite[Theorem 4.22]{hm4}, for none of the spaces $E_n$ are complemented in $E$ \cite[\S 15.9(10)]{k_tvs-1}.
\end{example}

\begin{remark}\label{re:not-diff}
  The problem in \Cref{ex:lp} stems from the fact that one would like to define said spectral projections by integration (as in \Cref{eq:averageop} for instance), and vector-valued integration is problematic in the absence of convexity. Recall first \cite[Theorem 3.5.1]{rol_mls} that the {\it Riemann integrability} \cite[\S 3.5]{rol_mls} of all $E$-valued continuous functions on a compact interval {\it implies} the local convexity of the complete metrizable TVS $E$. Sufficiently regular functions (e.g. analytic \cite[Theorem 3.5.2]{rol_mls} or more generally, sufficiently continuously differentiable \cite[Theorem 3.5.4]{rol_mls}) are integrable, but in the present case most
  \begin{equation*}
    \bS^1\ni g\xmapsto{\quad} gv\in E,\quad v\in E
  \end{equation*}
  are {\it not} continuously differentiable: simply adapt the well-known example \cite[Chapter 3, Example 8]{go_counter-analysis} of a nowhere differentiable continuous function on $[0,1]$, by extending it to a $\bZ$-periodic function on the real line and hence a piecewise-continuous function on $\bS^1\cong \bR/\bZ$.   
\end{remark}



\addcontentsline{toc}{section}{References}

\Addresses

\end{document}